\documentclass[11pt]{article}
\usepackage[T1]{fontenc}
\usepackage{lmodern}
\usepackage{amsmath,amssymb,amsthm,mathtools}
\usepackage[margin=1.05in]{geometry}
\usepackage{microtype}
\usepackage{enumitem}
\usepackage{needspace}
\usepackage{xcolor}
\usepackage[colorlinks=true,linkcolor=blue!55!black,citecolor=blue!55!black,
  urlcolor=blue!55!black,breaklinks=true]{hyperref}
\hypersetup{pdftitle={Order-preserving maps of the Turing degrees: regularity and consistency strength},
  pdfsubject={Order-preserving Martin conjecture, Solovay models, and equiconsistency with an inaccessible cardinal}}
\ifdefined\pdfgentounicode\input{glyphtounicode}\pdfgentounicode=1\fi
\setlist[enumerate]{leftmargin=*,itemsep=4pt,topsep=5pt}
\setlist[itemize]{leftmargin=*,itemsep=4pt,topsep=5pt}
\numberwithin{equation}{section}
\newtheorem{theorem}{Theorem}[section]
\newtheorem{proposition}[theorem]{Proposition}
\newtheorem{lemma}[theorem]{Lemma}
\newtheorem{corollary}[theorem]{Corollary}
\theoremstyle{definition}
\newtheorem{definition}[theorem]{Definition}
\theoremstyle{remark}
\newtheorem{remark}[theorem]{Remark}
\newcommand{\R}{\mathbb R}

\newcommand{\Cantor}{2^\omega}
\newcommand{\DT}{\mathcal D_T}
\newcommand{\leT}{\le_T}
\newcommand{\geT}{\ge_T}
\newcommand{\equivT}{\equiv_T}
\newcommand{\degT}[1]{[#1]_T}
\newcommand{\cone}[1]{\mathcal C_{#1}}
\newcommand{\ACR}{\mathsf{AC}_{\omega}(\R)}
\newcommand{\DCR}{\mathsf{DC}_{\R}}
\newcommand{\PSP}{\mathsf{PSP}_{\R}}
\newcommand{\CU}{\mathsf{CU}}
\newcommand{\OPreal}{\mathsf{OP}_{\mathrm{real}}}
\newcommand{\OPdeg}{\mathsf{OP}_{\mathrm{deg}}}
\DeclareMathOperator{\Con}{Con}
\DeclareMathOperator{\ran}{ran}
\DeclareMathOperator{\id}{id}
\title{Order-preserving maps of the Turing degrees:\\
regularity and consistency strength}
 \author{Liang Yu\thanks{Yu was partially supported by NSF of China
No. 12025103 and the Natural Science Foundation of Jiangsu Province
No. BK20243060.}\\[1ex]
\small School of Mathematics\\
\small Nanjing University, Jiangsu Province 210093\\
\small P. R. of China\\
\small\href{mailto:yuliang.nju@gmail.com}{\texttt{yuliang.nju@gmail.com}}}

\begin{document}
\maketitle

\begin{abstract}
We formulate a proof of Part~I of Martin's conjecture for
order-preserving maps directly on the quotient of the reals by Turing
equivalence. The assumptions are countable choice for reals, the perfect-set
property for sets of reals, and continuous uniformization on a comeager
domain.  The argument applies
under $\mathsf{AD}+\DCR$ and inside the choiceless Solovay model.
We then determine the exact consistency strength of the unrestricted
order-preserving dichotomy as an axiom over $\mathsf{ZF}+\ACR$:
it is equiconsistent with $\mathsf{ZFC}$ plus a strongly inaccessible
cardinal.  
\end{abstract}

\noindent\textbf{2020 Mathematics Subject Classification.}
Primary 03D28; Secondary 03E15, 03E60.

\smallskip
\noindent\textbf{Keywords.} Turing degrees; Martin's conjecture;
order-preserving functions; perfect sets; comeager uniformization;
Solovay model; inaccessible cardinals; relative consistency.

\section{Introduction and main statement}\label{sec:introduction}

Let $\DT$ denote the partially ordered set of Turing degrees. We write
$\mathbf a\vee\mathbf b$ for the join of two degrees and
$\degT{X}$ for the degree of a real $X\in\Cantor$. For
$\mathbf a\in\DT$, put
\[
 \cone{\mathbf a}=\{\mathbf d\in\DT:\mathbf a\leT\mathbf d\}.
\]
A map $\varphi:\DT\to\DT$ is \emph{order preserving} if
$\mathbf d\leT\mathbf e$ implies
$\varphi(\mathbf d)\leT\varphi(\mathbf e)$. It is \emph{constant
on a cone} if its values are equal throughout some
$\cone{\mathbf a}$, and it is \emph{above the identity on a cone}
if $\mathbf d\leT\varphi(\mathbf d)$ throughout some cone.
The latter relation is also written $\id\le_M\varphi$.
A set of degrees is \emph{cofinal} if it meets every cone.

Part~I of Martin's conjecture asserts, in its usual determinacy setting,
that a Turing-invariant real function is constant in degree on a cone or
above the identity on a cone. Lutz--Siskind establish this conclusion for
order-preserving functions \cite{LutzSiskind2025}. We give a formulation
under explicit regularity hypotheses, with the function defined on
$\DT$ itself. In a choiceless setting this avoids assuming, without
justification, that every degree map has a global real lift.

\Needspace{12\baselineskip}
\begin{definition}\label{def:principles}
We use the following principles.
\begin{enumerate}[label=(\roman*)]
 \item $\ACR$ is countable choice for nonempty sets of reals.
 \item $\PSP$ asserts that every uncountable subset of $\Cantor$
 contains a nonempty perfect subset.
 \item $\CU$ asserts that for every relation
 $S\subseteq\Cantor\times\Cantor$ with
 $\forall X\,\exists Y\ S(X,Y)$, there are a comeager
 $G_\delta$ set $C\subseteq\Cantor$ and a continuous
 $g:C\to\Cantor$ such that $S(X,g(X))$ for every $X\in C$.
\end{enumerate}
Continuity and the $G_\delta$ domain are part of the definition of
$\CU$ in this paper.
\end{definition}

\begin{theorem}\label{thm:main}
Work in $\mathsf{ZF}+\ACR+\PSP+\CU$. Every order-preserving
map $\varphi:\DT\to\DT$ is constant on a cone or above the
identity on a cone. If $\varphi$ is not constant on any cone:
\begin{enumerate}[label=(\roman*)]
 \item there is $\mathbf k\in\DT$ such that
 \begin{equation}\label{eq:main-code}
  \mathbf d\leT\mathbf k\vee\varphi(\mathbf d)
  \qquad(\mathbf d\geT\mathbf k);
 \end{equation}
 \item the range of $\varphi$ is cofinal;
 \item for every $\mathbf a\geT\mathbf k$ and every degree
 $\mathbf v$, the set
 \begin{equation}\label{eq:main-fiber}
  \{X\in\Cantor:\varphi(\mathbf a\vee\degT{X})=\mathbf v\}
 \end{equation}
 is at most countable;
 \item there is $\mathbf b\geT\mathbf k$ such that
 \begin{equation}\label{eq:main-domination}
  \mathbf d\leT\varphi(\mathbf d)
  \qquad(\mathbf d\geT\mathbf b).
 \end{equation}
 Thus $\varphi$ is above the identity on the upper cone
 $\cone{\mathbf b}$.
\end{enumerate}
\end{theorem}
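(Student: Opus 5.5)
We first lift $\varphi$ to the reals. By $\CU$ applied to $S(X,Y)\iff Y\in\varphi(\degT{X})$, there are a comeager $G_\delta$ set $C$ and a continuous $g:C\to\Cantor$ with $\degT{g(X)}=\varphi(\degT X)$ for all $X\in C$. Since $g$ is continuous on a $G_\delta$, it is computable relative to some real parameter $\mathbf p$ on $C$. Moreover, every sufficiently generic real over $\mathbf p$ lies in $C$, because $C$ contains a $G_\delta$ set coded by some real and we may absorb that code into $\mathbf p$. The main device is the following. For any degree $\mathbf d\geT\mathbf p$, choose $X$ that is $1$-generic relative to a representative $D\in\mathbf d$. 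Then $D\oplus X$ and $X$ are both generic enough, so $\varphi$ is computed continuously on the relevant join. In addition, $\mathbf d=(\mathbf d\vee\degT X)\wedge(\mathbf d\vee\degT{X'})$ for two mutually generic $X,X'$ (the standard minimal-pair fact for mutual generics over $\mathbf d$). Hence order preservation lets us bound $\varphi(\mathbf d)$ by the values on generic joins, where $g$ is continuous and $\mathbf p$-computable. Countable choice $\ACR$ is used only to pick countably many such generics and representatives along the way.

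The dichotomy comes next. Fix $\mathbf k\geT\mathbf p$ and consider the fibers (iii). Suppose some fiber $\{X:\varphi(\mathbf a\vee\degT X)=\mathbf v\}$ with $\mathbf a\geT\mathbf k$ is uncountable. By $\PSP$ it contains a perfect set $P$, and we may take $P$ to consist of mutually generic reals over $\mathbf a$ (through a perfect subtree argument relative to the code of $P$ and $\mathbf a$). Take $X,X'\in P$ mutually generic over $\mathbf a\vee\mathbf v\vee\text{code}$. Then for any $\mathbf d\geT\mathbf a$ with $\mathbf d\leT\mathbf a\vee\degT X$, order preservation gives $\varphi(\mathbf d)\leT\mathbf v$. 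Using the minimal-pair decomposition and continuity of $g$ near generics, we can propagate this to a whole cone on which $\varphi\le\mathbf v$. A bounded order-preserving map on a cone has, by $\PSP$-style counting of its countably many possible values below $\mathbf v$ and $\ACR$, a maximal value attained cofinally. Monotonicity then yields constancy on a cone. So if $\varphi$ is not constant on a cone, all fibers in (iii) are countable.

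For (i), with countable fibers we use the continuous lift. For generic $X$ over $\mathbf k$, the real $X$ is determined up to countably many candidates by $g(K\oplus X)$, and the fiber of a value is a countable set definable (indeed $\Sigma^1_1$ or closed, since $g$ is continuous) from $K$ and $g(K\oplus X)$. A countable closed/analytic set has all members hyperarithmetic in its code. The aim is to sharpen this to Turing reducibility by applying $\CU$ once more to the relation ``$Y$ decodes $X$ from $g(K\oplus X)$.'' That gives, on a comeager set, $X\leT K\oplus\varphi$-value. Transferring from generic $X$ to arbitrary $\mathbf d\geT\mathbf k$ again uses the decomposition $\mathbf d=(\mathbf d\vee X)\wedge(\mathbf d\vee X')$, after enlarging $\mathbf k$. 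This gives \eqref{eq:main-code}. Item (ii) follows immediately, since $\mathbf d\leT\mathbf k\vee\varphi(\mathbf d)$ shows that the range joined with $\mathbf k$ is cofinal, and applying this to $\mathbf d\geT\mathbf k\vee\mathbf e$ gives range values above $\mathbf e$ modulo $\mathbf k$. To get (ii) without the $\mathbf k$, and (iv), we need $\mathbf k\leT\varphi(\mathbf d)$ on a cone. For this, note that $\varphi(\mathbf d)$ is not bounded on any cone (otherwise the previous paragraph gives constancy). We choose $\mathbf b\geT\mathbf k$ with $\mathbf k\leT\varphi(\mathbf b)$, using cofinality of $\mathbf k\vee\varphi$ and a countable-fiber argument. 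Then for $\mathbf d\geT\mathbf b$ we have $\varphi(\mathbf d)\geT\varphi(\mathbf b)\geT\mathbf k$, so $\mathbf d\leT\mathbf k\vee\varphi(\mathbf d)=\varphi(\mathbf d)$.

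The main obstacle is the step from countable fibers to \eqref{eq:main-code}, namely upgrading hyperarithmetic decoding to Turing decoding uniformly on a cone. It is also the step of finding $\mathbf b$ with $\mathbf k\leT\varphi(\mathbf b)$. For the latter I would argue by contradiction. If $\mathbf k\not\leT\varphi(\mathbf d)$ for all $\mathbf d$ in a cone, then taking $\mathbf d=\mathbf k\vee\degT X$ for generic $X$ and using \eqref{eq:main-code}, $\varphi(\mathbf d)$ would be a degree joining with $\mathbf k$ to compute $X$ but not computing $\mathbf k$. The plan is to exploit the countable fibers to show that this forces uncountably many $X$ into one fiber, which is a contradiction.
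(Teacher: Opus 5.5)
There is a genuine gap, and it starts with the object you build on. Your forward $\CU$-lift $g$ of $\varphi$ lives on a comeager set, and the ``main device'' does not transfer information out of it, for two reasons.
\begin{itemize}
\item The reals $D\oplus X$ all lie in the closed nowhere dense set $\{D\oplus Z:Z\in\Cantor\}$. Genericity of $X$ gives no reason for $D\oplus X$ to lie in the domain of $g$; in the example below it never does once $D\geT 0'$.
\item Monotonicity applied to $\mathbf d\leT\mathbf d\vee\degT X$ bounds $\varphi(\mathbf d)$ only from \emph{above}, since order preservation says nothing about meets. But \eqref{eq:main-code} is a \emph{lower} bound on $\varphi(\mathbf d)$.
\end{itemize}

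Your second paragraph is actually false for $\mathbf k$ merely above the lift parameter. Let $\varphi(\mathbf d)=\mathbf 0$ if $\mathbf 0''\nleq_T\mathbf d$, and $\varphi(\mathbf d)=\mathbf d$ otherwise. This map is order preserving and not constant on any cone. No $1$-generic real computes $0'$, hence none computes $0''$. So $g\equiv 0^\omega$ on the set of $1$-generics is a legitimate lift, and that set is a comeager $G_\delta$ with a $0'$-computable code, giving $\mathbf p\leT\mathbf 0'$. Yet for $\mathbf k=\mathbf a=\mathbf 0'$ and $\mathbf v=\mathbf 0$, the fiber \eqref{eq:main-fiber} contains every $2$-generic real.

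The behaviour that matters sits on the meager cone above $\mathbf 0''$, which a lift on a comeager set never sees. So the ``propagation to a whole cone'' cannot be carried out. Countable fibers also cannot come before the right $\mathbf k$ is found; in the paper, (iii) is a one-line corollary of (i).

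Item (i) itself you leave open. The missing idea has three parts.
\begin{itemize}
\item Apply $\PSP$ to the saturated range $R_\varphi$, which is uncountable by Lemma~\ref{lem:range}, rather than to fibers.
\item Apply $\CU$ to the \emph{inverse} relation $\varphi(\degT U)=\degT{h(X)}$, where $h$ is a continuous injection into a perfect subset of $R_\varphi$. Restricting to a Cantor set inside the comeager domain gives continuous $\alpha,\beta$ with $\beta$ injective and $\varphi(\degT{\alpha(X)})=\degT{\beta(X)}$.
\item A single code $K$ gives $\alpha(X)\leT K\oplus X$ and $X\leT K\oplus\beta(X)$. For $X\in\mathbf d\geT\degT K$, the first reduction places the input $\alpha(X)$ \emph{below} $\mathbf d$, so monotonicity yields the lower bound $\degT{\beta(X)}\leT\varphi(\mathbf d)$. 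The second reduction decodes $X$.
\end{itemize}
No hyperarithmetic-to-Turing upgrade ever arises.

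Your closing absorption step is exactly Lemma~\ref{lem:absorption}. However, it needs some $\mathbf e$ with $\mathbf k\leT\varphi(\mathbf e)$, and you give no argument for this. Cofinality of $\{\mathbf k\vee\mathbf y:\mathbf y\in\ran\varphi\}$ is not cofinality of $\ran\varphi$. Your contradiction plan has no mechanism either. Degrees $\mathbf y$ with $\mathbf y\ngeq_T\mathbf k$ and $\mathbf k\vee\mathbf y\geT\degT X$ are plentiful, for example $\mathbf y=\degT X$ for $X$ generic over $K$. Nothing you have forces many $X$ into one fiber.

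This is where the paper uses a genuinely separate ingredient. $R_\varphi$ is countably directed (Lemma~\ref{lem:directed}) and contains a perfect set, and every such set is cofinal (Corollary~\ref{cor:perfect-directed}). That proof runs in three steps.
\begin{itemize}
\item Take a real $C\in R_\varphi$ computing a countable dense subset of the perfect set. One application of the Groszek--Slaman lemma (Theorem~\ref{thm:GS}) gives $D\in R_\varphi$ with $C''\leT D'$.
\item Relative highness (Lemma~\ref{lem:high-cover}) then provides a $D$-computable sequence listing all $C$-computable reals.
\item A second application of Groszek--Slaman reaches an arbitrary target, in particular $K$.
\end{itemize}
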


\begin{remark}[The Lutz--Siskind theorem as a consequence]
\label{rem:LS-consequence}
Under $\mathsf{ZF}+\mathsf{AD}+\DCR$, the regularity hypotheses
of Theorem~\ref{thm:main} hold; see Theorem~\ref{cor:AD}.
If $f:\Cantor\to\Cantor$ is Turing invariant and order preserving,
the induced degree map
\[
 \varphi(\degT{X})=\degT{f(X)}
\]
is well defined and order preserving. Theorem~\ref{thm:main}
therefore shows that $f$ is constant in Turing degree on a cone or
satisfies $X\leT f(X)$ throughout an upper cone.
Thus Theorem~\ref{thm:main} implies the order-preserving case of
Part~I of Martin's conjecture proved by Lutz--Siskind
\cite[Theorem~4.8]{LutzSiskind2025}.
The cofinality argument in Section~\ref{sec:cofinality} uses
Groszek--Slaman's original perfect-set lemma and relative highness;
it does not invoke the strengthened coding theorem
\cite[Theorem~4.3]{LutzSiskind2025}.
\end{remark}

\begin{theorem}[Consistency strength]\label{thm:consistency}
Let $B$ be any one of $\ACR$, $\DCR$, and $\mathsf{DC}$.
The following theories are equiconsistent:
\begin{enumerate}[label=(\roman*)]
 \item $\mathsf{ZFC}$ plus the existence of a strongly inaccessible cardinal;
 \item $\mathsf{ZF}+B$ plus the assertion that every Turing-invariant,
 order-preserving function $f:\Cantor\to\Cantor$ is constant in
 Turing degree on a cone or satisfies $X\leT f(X)$ on a cone;
 \item $\mathsf{ZF}+B$ plus the assertion that every order-preserving
 map $\varphi:\DT\to\DT$ is constant on a cone or above the
 identity on a cone.
\end{enumerate}
Moreover, the theory in~\textup{(ii)} proves that
$\omega_1^V$ is a strongly inaccessible cardinal in $L$.
\end{theorem}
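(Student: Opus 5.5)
The plan is to prove the cycle (i)$\Rightarrow$(iii)$\Rightarrow$(ii)$\Rightarrow$(i), with $B=\mathsf{DC}$ in the forward direction and $B=\ACR$ in the backward direction. This handles all three choices of $B$ at once, since $\mathsf{DC}\Rightarrow\DCR\Rightarrow\ACR$.

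\emph{(i)$\Rightarrow$(iii).} Let $\kappa$ be inaccessible. Take the Levy collapse $\mathrm{Col}(\omega,{<}\kappa)$ and let $N$ be Solovay's model $\mathrm{HOD}(\R\cup\mathrm{On})$ of the extension. Solovay's analysis gives $N\models\mathsf{ZF}+\mathsf{DC}+\PSP$, and every set of reals in $N$ has the Baire property. For $\CU$, take $S\in N$ defined from a real $r$ and ordinals. Each $X$ is generic over $V[r]$ for a small collapse, and $S(X,\cdot)$ is witnessed in a further small collapse. The usual homogeneity argument then lets us pick, on a comeager $G_\delta$ set of $X$ that are Cohen over $V[r]$, a Borel (hence, after shrinking, continuous) witness function. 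This is the step the paper attributes to the Solovay model. Theorem~\ref{thm:main} then yields the dichotomy of (iii) in $N$.

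\emph{(iii)$\Rightarrow$(ii).} This is the observation of Remark~\ref{rem:LS-consequence}. A Turing-invariant, order-preserving $f$ induces a well-defined order-preserving $\varphi$ on $\DT$, and the conclusions for $\varphi$ translate back to $f$. No choice is used here.

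\emph{(ii)$\Rightarrow$(i).} This is the main content, and the main obstacle. It suffices to prove the "moreover'' clause: under $\mathsf{ZF}+\ACR$ together with the dichotomy for real functions, $\omega_1^{L[x]}<\omega_1$ for every real $x$. Then $\omega_1^V$ is regular in $V$ (using $\ACR$) and is a limit cardinal of $L$, so it is inaccessible in $L$. The argument is by contradiction. Suppose $\omega_1^{L[x]}=\omega_1$. Then $L[x]$ has canonical codes $c_\alpha$, namely the $<_{L[x]}$-least real coding $L_\alpha[x]$, for cofinally many countable $\alpha$. I would restrict to a club of ``good'' $\alpha$, for instance those with $L_\alpha[x]\prec L_{\omega_1}[x]$, and set
\[
f(X)=\bigl(c_{\alpha(X)}\bigr)^{(\omega)},\qquad \alpha(X)=\text{least good }\alpha>\omega_1^{X\oplus x}.
\]
This $f$ is Turing invariant, since $\alpha(X)$ depends only on the degree of $X$. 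It is order preserving: $\omega_1^{X}$ is monotone under $\leT$, and for good $\alpha<\beta$ the code $c_\alpha$ lies in $L_\beta[x]$ and is arithmetically definable from $c_\beta$, so the $\omega$-jumps compare. This last claim is the step I expect to need the most care; if it fails, I would replace ``good'' by a sparser club where $c_\alpha$ is uniformly $\Delta^0_n$ in $c_\beta$.

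It remains to see that $f$ violates both alternatives. It is not constant on any cone, because $\omega_1^{X\oplus x}$ is unbounded in $\omega_1$ on every cone. It is not above the identity on any cone. Otherwise every $X$ in a cone, and hence every real (via joins with the base), would lie in $L[x]$. That makes $\R$ well-orderable of type $\omega_1$. From such a well-order one can define $X\mapsto$ the least real in the well-order not $\leT X$ along the $\omega_1$-sequence of degrees. I would check this contradicts (ii) directly, or more simply contradicts the perfect-set consequence of (ii) established in Section~\ref{sec:cofinality}. Either way $f$ is a counterexample, which completes the cycle.
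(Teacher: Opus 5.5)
You handle the forward direction (Solovay model with $\mathsf{DC}$, then Theorem~\ref{thm:main}), the step (iii)$\Rightarrow$(ii), and the passage from ``$\omega_1^{L[x]}<\omega_1$ for all $x$'' to inaccessibility in $L$ via $\ACR$ as the paper does. Your $f(X)=(c_{\alpha(X)})^{(\omega)}$ is also fine as far as you verify it. Order preservation holds: by elementarity $c_\alpha\in L_\beta[x]$ for good $\alpha<\beta$, so $c_\alpha\leT c_\beta'$. Non-constancy holds once you add that distinct good ordinals give distinct degrees, because $c_\alpha^{(\omega)}\in L_\beta[x]$ while $c_\beta\notin L_\beta[x]$.

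The gap is the refutation of the second alternative. From $X\leT f(X)$ on a cone you correctly get $\R\subseteq L[x]$. But that does not contradict (ii) by itself: refuting (ii) together with a well-order of $\R$ of type $\omega_1$ requires producing a counterexample, which is the whole task again. Neither proposed finish closes this. Section~\ref{sec:cofinality} establishes no perfect-set consequence of (ii). It assumes a perfect subset of a countably directed set and concludes cofinality, and nothing in the paper derives $\PSP$, or the nonexistence of a well-order of $\R$, from the dichotomy. The other option, the well-order-least real not $\leT X$, is not order preserving, because the well-order is unrelated to $\leT$. For example, if incomparable $P,Q$ come first after $0$, the map sends $0\mapsto P$ and $P\mapsto Q$.

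To repair this you need an explicit witness, above an arbitrary base $B$, to the failure of domination. The paper does this as follows.
\begin{itemize}
\item Replace the well-order by a strictly $\leT$-increasing chain $(A_\alpha)_{\alpha<\omega_1}$ built in $L[x]$ by jumps of joins. This needs only $\omega_1^{L[x]}=\omega_1$, not $\R\subseteq L[x]$.
\item Set $F(X)=A_{\alpha_X}$, where $\alpha_X$ is least with $A_\alpha\nleq_T X$.
\item $F(X)\nleq_T X$ gives non-constancy on every cone.
\item Against domination above $B$, put $\alpha=\alpha_B$, so $B\leT F(B)=A_\alpha$. Take $X\geT A_\alpha$ incomparable with $A_{\alpha+1}$. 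Then $X\geT B$, $F(X)=A_{\alpha+1}$, and $X\nleq_T F(X)$.
\end{itemize}
This makes your first construction unnecessary.

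Alternatively, you can save your own $f$ directly. Take $G$ Cohen generic over a countable $L_\delta[B\oplus x]$ containing $f(B)$. Sufficient genericity gives $\omega_1^{B\oplus G\oplus x}=\omega_1^{B\oplus x}$, hence $f(B\oplus G)=f(B)$. On the other hand $G\nleq_T f(B)$, so $B\oplus G\nleq_T f(B\oplus G)$.
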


Section~\ref{sec:consistency} proves Theorem~\ref{thm:consistency}.
The upper bound uses the regularity argument in the Solovay model.
For the lower bound, we adapt the counterexample of
Lutz--Siskind \cite[Theorem~6.11]{LutzSiskind2025} inside $L[r]$ and
extend its defining rule to all ambient reals. This forces
$\omega_1^{L[r]}<\omega_1$ for every real $r$ whenever the unrestricted
dichotomy holds. Countable choice for reals then gives the
inaccessible cardinal in $L$.

The proof of Theorem~\ref{thm:main} has two distinct ingredients.
Continuous uniformization over a
perfect family of outputs yields \eqref{eq:main-code}; a single code
accounts for both the input construction and output decoding.
Two applications of Groszek--Slaman's perfect-set lemma
\cite[Lemma~2.2]{GroszekSlaman1998}, connected by Martin's high
domination theorem \cite{Martin1966}, prove that every countably
directed set containing a perfect subset is cofinal. This makes the
fixed code computable from one value of $\varphi$, and monotonicity
completes the argument. Section~\ref{sec:cofinality} includes the
relative highness argument and the resulting cofinality proof.

This organization is closely related to the continuous-decoder argument
associated with Kihara in \cite[Section~4.3]{LutzSiskind2025}.
Sections~2--5 make the quotient formulation and the regularity
assumptions explicit; Section~6 derives the matching consistency bounds.
The order-preserving dichotomy and the cofinality conclusion are due to
Lutz--Siskind. The proof of cofinality given here uses the original
Groszek--Slaman lemma in place of their strengthened coding theorem.

Section~\ref{sec:main-proof} proves the main theorem. The applications in
Section~\ref{sec:applications} include $\mathsf{AD}+\DCR$ and the
choiceless Solovay model.

\section{Coding continuous maps and controlling the range}
\label{sec:preliminaries}

Unless a stronger hypothesis is specified, this section works in
$\mathsf{ZF}+\ACR$. All real oracles in the main argument belong
to $\Cantor$. Write $[s]=\{X:s\subseteq X\}$ for a basic clopen
cylinder, and fix a computable enumeration $(\Phi_e)_{e\in\omega}$
of oracle Turing machines.

The following lemmas are clear.

\begin{lemma}[A single code for a continuous map]\label{lem:code}
If $D\subseteq\Cantor$ and $g:D\to\Cantor$ is continuous in the
subspace topology, then some real $K$ and one oracle program $\Phi_e$ satisfy
\[
 g(X)=\Phi_e^{ K\oplus X}\qquad(X\in D).
\]
 
\end{lemma}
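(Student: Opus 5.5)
We encode the modulus of continuity of $g$ as a single real $K$ and let one fixed program read it. Since everything is built from countably many finite objects indexed by computable data, the argument needs no choice beyond $\mathsf{ZF}$.

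For each finite string $s\in 2^{<\omega}$ and each $n\in\omega$ and $i\in\{0,1\}$, consider the statement
\[
 P(s,n,i):\quad \forall X\in D\cap[s]\ \ g(X)(n)=i .
\]
The first step is to let $K$ be the characteristic function of the set of triples $\langle s,n,i\rangle$ for which $P(s,n,i)$ holds, coded via a fixed computable pairing on $2^{<\omega}\times\omega\times 2$. This defines a single real outright from $g$ and $D$, with no choices made.

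The second step is to define the program $\Phi_e$. On oracle $K\oplus X$ and input $n$, it searches through $m=0,1,2,\dots$ and $i\in\{0,1\}$ for the first pair with $K(\langle X\restriction m,n,i\rangle)=1$. When it finds one, it outputs $i$. This is one uniform program, independent of $g$, so all the information about $g$ sits in $K$.

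The third step verifies that $\Phi_e^{K\oplus X}(n)=g(X)(n)$ for $X\in D$, in two parts.
\begin{itemize}
 \item \emph{Termination.} The set $\{Y\in D: g(Y)(n)=g(X)(n)\}$ is relatively open in $D$ by continuity of $g$ in the subspace topology. So it contains $D\cap[X\restriction m]$ for some $m$, and then $P(X\restriction m,n,g(X)(n))$ holds. Hence the search halts.
 \item \emph{Correctness.} Any triple found with $s=X\restriction m$ has $X\in D\cap[s]$. So $P(s,n,i)$ forces $i=g(X)(n)$, and any output is correct.
\end{itemize}

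The only subtle point, and the one I would check most carefully, is vacuous truth. When $D\cap[s]=\emptyset$, both $P(s,n,0)$ and $P(s,n,1)$ hold. This does no harm, because the program only queries strings $s$ that are initial segments of the actual input $X\in D$, and for those $D\cap[s]\neq\emptyset$. Nothing is claimed about the behaviour of $\Phi_e^{K\oplus X}$ for $X\notin D$.
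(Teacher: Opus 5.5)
Your proof is correct and is the standard argument the paper has in mind. The paper gives no proof, calling the lemma clear; its later remark that the real codes are obtained ``by constructions on a fixed countable basis'' is exactly your coding into $K$ of the triples $(s,n,i)$ with $g(X)(n)=i$ for all $X\in D\cap[s]$, and your observations that the vacuous case is harmless and that $K$ is defined without choice both hold, which matters for the choiceless base theory.
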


\begin{lemma}[Cantor subsets]\label{lem:cantor-subset}
Every nonempty perfect subset of $\Cantor$ contains the image of a
continuous injection from $\Cantor$. Every comeager $G_\delta$
subset of $\Cantor$ also contains such an image.
\end{lemma}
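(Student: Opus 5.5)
The first assertion of Lemma~\ref{lem:cantor-subset} is the classical Cantor scheme construction, and the second reduces to it: a comeager $G_\delta$ set will be shown to contain a nonempty perfect subset. Throughout, we build a binary tree of finite strings by recursion on $\omega$. Each choice made is a choice of a finite string, which we make canonically by taking the least one in a fixed computable enumeration of $2^{<\omega}$. No appeal to choice is therefore needed, and the argument goes through in $\mathsf{ZF}$.

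\emph{Perfect sets.} Let $P\subseteq\Cantor$ be nonempty perfect, and let $T=\{s: [s]\cap P\ne\emptyset\}$ be its tree. Then $T$ is pruned and every node has incomparable extensions in $T$, since $P$ has no isolated points. We assign to each $\sigma\in 2^{<\omega}$ a node $t_\sigma\in T$ as follows.
\begin{itemize}
 \item Set $t_\emptyset=\emptyset$.
 \item Given $t_\sigma$, let $t_{\sigma0}$ and $t_{\sigma1}$ be the least (in the fixed enumeration) pair of incomparable extensions of $t_\sigma$ in $T$, with $t_{\sigma0}$ the one lying to the left at their first disagreement.
\end{itemize}
Define $h(X)=\bigcup_n t_{X\upharpoonright n}$. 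The map $h$ is continuous because each output bit is determined by a finite part of the input. It is injective because distinct inputs pass through incomparable nodes. Its values lie in $P$ because $h(X)$ is a branch of $T$ and $P$ is closed.

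\emph{Comeager $G_\delta$ sets.} Write $C=\bigcap_n U_n$ with each $U_n$ open and dense. We modify the recursion so that it also enters $U_n$ at level $n+1$. Given $t_\sigma$ with $|\sigma|=n$, first take two incomparable extensions of $t_\sigma$, for instance $t_\sigma{}^\frown0$ and $t_\sigma{}^\frown1$. Then, using density of $U_n$, extend each of them to the least string $u$ in the enumeration with $[u]\subseteq U_n$. The cylinder condition $[u]\subseteq U_n$ is not decidable in general, so this last step needs some care.
\begin{itemize}
 \item To avoid the decidability issue, choose the $u$ with $[u]\subseteq U_n$ from an explicit enumeration of basic sets whose union is $U_n$.
 \item Such an enumeration is fixed once the representation $C=\bigcap_n U_n$ is given as a sequence of countable unions of cylinders, i.e.\ a $G_\delta$ code.
 \item The sequence $(U_n)$ is given, so one set of codes is fixed at the outset. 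If only the set $C$ is given, one canonical code suffices: let $U_n$ be the union of all cylinders $[s]$ with $[s]\subseteq U_n$. This description is definable with no choice.
\end{itemize}
The resulting map $h$ is continuous and injective exactly as before. Every branch $h(X)$ passes, for each $n$, through a string $u$ with $[u]\subseteq U_n$, so $h(X)\in\bigcap_n U_n=C$.

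The only delicate point is ensuring that no choice is used, and the canonical enumeration handles it. I expect the proof to be essentially routine, which is why the paper calls these lemmas clear.
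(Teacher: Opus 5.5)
Your argument is correct and is the standard Cantor-scheme construction the paper has in mind when it calls this lemma clear; the paper itself gives no proof. Two minor points. Each $U_n$ is dense because $U_n\supseteq C$ and $C$ is dense by the Baire category theorem, which holds in $\mathsf{ZF}$ for $2^\omega$; you assert this density without saying why. The decidability worry is also unnecessary: once any witnessing sequence $(U_n)$ is fixed, the set $\{u\supseteq t_\sigma{}^\frown i:[u]\subseteq U_n\}$ is a definable nonempty subset of $2^{<\omega}$ and so has a least element in the fixed enumeration. No coding is needed, and you need not obtain $(U_n)$ canonically from $C$, since the lemma is a single existence statement.
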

The following lemma can be found in {\cite[Theorem 8.38]{Kechris1995}}.
 
\begin{lemma}[Continuous restriction]\label{lem:continuous-restriction}
Every Baire-measurable function $g:X\to\Cantor$, where $X$ is
Polish, is continuous on a comeager $G_\delta$ subset of $X$.
\end{lemma}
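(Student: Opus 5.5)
This is the classical fact (Kechris 8.38): a Baire-measurable $g:X\to\Cantor$ is continuous on a dense $G_\delta$ set. I would prove it from a countable basis, making explicit choices only over countably many indices, so that $\ACR$ (indeed plain $\mathsf{ZF}$) suffices.

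Fix a countable basis $(U_n)_{n\in\omega}$ of $X$, and enumerate the basic clopen cylinders $[s]$ of $\Cantor$ as $(V_m)_{m\in\omega}$. Each preimage $g^{-1}(V_m)$ has the Baire property, so $g^{-1}(V_m)\,\triangle\,O_m$ is meager for some open $O_m\subseteq X$. I will not choose an arbitrary witness. Instead, take $O_m$ to be the union of all $U_n$ with $U_n\setminus g^{-1}(V_m)$ meager. This is the canonical regular-open representative: by the Banach category theorem for countable unions of basic sets, $g^{-1}(V_m)\,\triangle\,O_m$ is meager. Write $g^{-1}(V_m)\triangle O_m\subseteq\bigcup_k F_{m,k}$ with each $F_{m,k}$ closed nowhere dense.

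The obstacle is that the $F_{m,k}$ must be chosen uniformly. Any meager set in a Polish space is covered by a countable union of closed nowhere dense sets. Without choice, however, picking such covers for countably many $m$ at once needs care. I would handle this by coding each closed set by the real listing the basic open sets in its complement. Countable choice for reals, $\ACR$, then selects a sequence of codes $(F_{m,k})_{m,k}$, since the set of valid code sequences for each fixed $m$ is a nonempty set of reals. Alternatively one can avoid choice by showing directly that meagerness of $A\setminus O$ is witnessed canonically. The $\ACR$ route is safer, and $\ACR$ is the paper's standing base theory, so I would use it.

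Now set $G=X\setminus\bigcup_{m,k}F_{m,k}$. This is a countable intersection of dense open sets, hence a comeager $G_\delta$ by Baire's theorem in the Polish space $X$. On $G$ we have $g^{-1}(V_m)\cap G=O_m\cap G$, which is relatively open. Since the $V_m$ form a basis of $\Cantor$, the restriction $g\restriction G$ is continuous. The only nonroutine step is the uniform selection of meager witnesses, and I expect $\ACR$ to handle it as described.
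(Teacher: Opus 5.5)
Your proof is correct and is essentially the standard argument behind \cite[Theorem~8.38]{Kechris1995}, which the paper cites in place of a proof. You take open Baire-property witnesses $O_m$ for the preimages of a countable basis, remove a meager $F_\sigma$ set covering all the symmetric differences $g^{-1}(V_m)\triangle O_m$, and observe that the preimages become relatively open on what remains. Your extra bookkeeping (canonical regular-open witnesses, and $\ACR$ to select codes for the covers $F_{m,k}$) is a sensible adaptation to the paper's choiceless base theory. However, your parenthetical claim that plain $\mathsf{ZF}$ suffices is not justified by your argument, since the uniform selection of the $F_{m,k}$ genuinely uses $\ACR$.
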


\begin{definition}\label{def:saturated-range}
For a degree map $\varphi$, its \emph{saturated real range} is
\[
 R_\varphi=\{Y\in\Cantor:\degT{Y}\in\ran(\varphi)\}.
\]
This set is defined without choosing representatives of all
degrees.
\end{definition}

\Needspace{9\baselineskip}
\begin{lemma}[Countable range and cone constancy]\label{lem:range}
For an order-preserving map $\varphi:\DT\to\DT$, the following
conditions are equivalent:
\begin{enumerate}[label=(\roman*)]
 \item $\varphi$ is constant on a cone;
 \item its range is bounded by one degree;
 \item $R_\varphi$ is countable.
\end{enumerate}
\end{lemma}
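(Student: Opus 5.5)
We prove the cycle of implications (i)$\Rightarrow$(ii)$\Rightarrow$(iii)$\Rightarrow$(i), working in $\mathsf{ZF}+\ACR$ only.

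\emph{(i)$\Rightarrow$(ii).} Suppose $\varphi(\mathbf d)=\mathbf c$ for all $\mathbf d\in\cone{\mathbf a}$. Take an arbitrary degree $\mathbf e$. Then $\mathbf e\leT\mathbf e\vee\mathbf a$, and $\mathbf e\vee\mathbf a$ lies in $\cone{\mathbf a}$. Monotonicity gives $\varphi(\mathbf e)\leT\varphi(\mathbf e\vee\mathbf a)=\mathbf c$. So $\mathbf c$ bounds the range. No choice is needed here.

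\emph{(ii)$\Rightarrow$(iii).} Suppose every value of $\varphi$ is $\leT\mathbf c$, and fix one real $C\in\mathbf c$ (a single choice). Then $R_\varphi$ is contained in $\{Y:Y\leT C\}=\{\Phi_e^C:e\in\omega,\ \Phi_e^C\text{ total}\}$. This set is the image of a subset of $\omega$ under an explicitly definable map, so it is countable without any choice.

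\emph{(iii)$\Rightarrow$(i).} This is the only nontrivial step. Suppose $R_\varphi$ is countable. Then $\ran(\varphi)$ is countable, since it is the image of $R_\varphi$ under $Y\mapsto\degT Y$.

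\begin{itemize}
\item \textbf{Finding a single bound.} Enumerate $R_\varphi$ as $(Y_n)_n$. Since $\ran(\varphi)$ is then countable, we get $\mathbf c=\degT{\bigoplus_n Y_n}$, which bounds the range. This needs only the enumeration, not $\ACR$.
\item \textbf{Reducing to a stabilisation argument.} It remains to show that a map with countable range $\{\mathbf v_n\}$ is constant on a cone. Consider the cones $\cone{\mathbf d}$ and the sets $A_n=\{\mathbf d:\varphi(\mathbf d)=\mathbf v_n\}$. By monotonicity, each $\{\mathbf d:\varphi(\mathbf d)\geT\mathbf v_n\}$ is upward closed.
\item \textbf{Attempt via maximal values.} I would look for a value that is realized cofinally and is maximal. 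Suppose no value is maximal among the values attained on some cone. Then we could build an increasing sequence $\mathbf d_0\leT\mathbf d_1\leT\cdots$ with $\varphi(\mathbf d_0)<\varphi(\mathbf d_1)<\cdots$ strictly increasing. Choosing reals for the $\mathbf d_i$ uses $\ACR$, or dependent choice; we can do it via the countable enumeration of the range. Let $\mathbf d_\infty$ be the join of the $\mathbf d_i$. Then $\varphi(\mathbf d_\infty)$ bounds every $\varphi(\mathbf d_i)$, so it lies strictly above all of them. Iterating transfinitely would produce too many values, contradicting countability.
\end{itemize}

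The key point here is the following. Fix one value $\mathbf v$ and consider the upward-closed set $U_{\mathbf v}=\{\mathbf d:\varphi(\mathbf d)\geT\mathbf v\}$.

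\begin{itemize}
\item \textbf{Choosing the value.} Take $\mathbf v=\varphi(\mathbf d^*)$ where the range restricted to $\cone{\mathbf d^*}$ is $\{\varphi(\mathbf d^*)\}$. To find $\mathbf d^*$, I try to pick $\mathbf d^*$ with $\varphi(\mathbf d^*)$ maximal in the countable range ordered by $\leT$.
\item \textbf{Cleaner route via the bound.} Since the range is bounded by $\mathbf c$ and monotone, set $\mathbf d\geT\mathbf c'$ for a suitable degree. Using countability, enumerate the values $\mathbf v_n$. For each $n$ such that $\mathbf v_n$ is attained above every degree, $\mathbf v_n$ is attained cofinally. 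Then define $\mathbf w=\bigvee$ of a sequence of values, realised along an increasing chain of inputs; this stabilises because there are only countably many values.
\end{itemize}

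The main obstacle is exactly this stabilisation: ruling out an infinite strictly increasing chain of values that never stabilises on a cone. I expect to handle it by taking the join of a countable chain of witnesses, chosen with $\ACR$, and using that $\varphi(\bigvee\mathbf d_i)$ must itself equal some $\mathbf v_m$ above all the chain values. Iterating this $\omega_1$ times would exhaust the countable range, though that iteration itself needs care in $\mathsf{ZF}$.
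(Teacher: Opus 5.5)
Your implications (i)$\Rightarrow$(ii) and (ii)$\Rightarrow$(iii) are fine. The implication (iii)$\Rightarrow$(i), however, is not proved: it ends in an unfinished ``stabilisation'' argument.

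An $\omega$-chain $\mathbf d_0\leT\mathbf d_1\leT\cdots$ with strictly increasing values is no contradiction by itself, even with the observation that $\varphi(\bigvee_i\mathbf d_i)$ lies above all of them. The contradiction you want would come only after an $\omega_1$-length transfinite recursion that chooses new witnesses at every stage. As you have set it up, that recursion is not justified by $\ACR$, which only provides countably many choices at once. You acknowledge this (``needs care in $\mathsf{ZF}$''), but it is exactly the step on which the implication rests. Likewise, the bound $\degT{\bigoplus_nY_n}$ from your first bullet does not help toward (i), because it need not be a value of $\varphi$.

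The missing idea is to apply your own join observation to preimages of all the values at once, not to a chain:
\begin{enumerate}
 \item Enumerate $R_\varphi$ as $(Y_n)_{n\in\omega}$, and use $\ACR$ once to choose $X_n$ with $\varphi(\degT{X_n})=\degT{Y_n}$.
 \item Put $\mathbf b=\degT{\bigoplus_nX_n}$. Since $\degT{X_n}\leT\mathbf b$, monotonicity gives $\degT{Y_n}\leT\varphi(\mathbf b)$ for every $n$. So $\varphi(\mathbf b)$ is the greatest element of $\ran(\varphi)$.
 \item For $\mathbf d\geT\mathbf b$ we then have $\varphi(\mathbf b)\leT\varphi(\mathbf d)\leT\varphi(\mathbf b)$. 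Hence $\varphi$ is constant on $\cone{\mathbf b}$.
\end{enumerate}
This is the paper's argument. It uses a single application of $\ACR$ and no iteration or stabilisation.
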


\begin{proof}
 Clearly (i) implies (ii) and (ii) implies (iii).  
 
Assume (iii), and enumerate the nonempty set $R_\varphi$ as
$(Y_n)_{n\in\omega}$, with repetitions if necessary. Choose
$X_n\in\Cantor$ such that
$\varphi(\degT{X_n})=\degT{Y_n}$, using $\ACR$.
Put $B=\bigoplus_nX_n$ and $\mathbf b=\degT{B}$.
Every attained degree is below $\varphi(\mathbf b)$, by
monotonicity. For $\mathbf d\geT\mathbf b$,
\[
 \varphi(\mathbf b)\leT\varphi(\mathbf d)
 \leT\varphi(\mathbf b),
\]
which gives (i).
\end{proof}

\Needspace{8\baselineskip}
\begin{lemma}[Countable directedness]\label{lem:directed}
If $\varphi$ is order preserving, every countable subset of
$R_\varphi$ has an upper bound for Turing reducibility that belongs
to $R_\varphi$.
\end{lemma}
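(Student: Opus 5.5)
The plan mirrors the proof of Lemma~\ref{lem:range}: pick preimages with $\ACR$, join them, and use monotonicity. Let $(Y_n)_{n\in\omega}$ enumerate a countable subset of $R_\varphi$; repetitions are allowed, so the finite case is included. The empty subset is not an issue once $R_\varphi$ is nonempty, and it is nonempty because $\DT$ is nonempty. We do not assume that a countable set of reals comes with a chosen enumeration; if it does not, we first use $\ACR$ to pick one. The enumeration in the hypothesis is then treated as given.

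For each $n$, the set
\[
 A_n=\{X\in\Cantor:\varphi(\degT{X})=\degT{Y_n}\}
\]
is a nonempty set of reals, since $Y_n\in R_\varphi$. This set is defined from $\varphi$ and $Y_n$ alone, with no choice of representatives. Countable choice for reals, applied to the sequence $(A_n)_{n\in\omega}$, gives a sequence $(X_n)_{n\in\omega}$ with $X_n\in A_n$ for every $n$. To make the application literal, we may apply $\ACR$ to the single set of reals $\{\langle n\rangle^\frown X: X\in A_n,\ n\in\omega\}$ split into its countably many slices; any standard form of $\ACR$ suffices.

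Put $B=\bigoplus_n X_n$ and $\mathbf b=\degT{B}$. Since $X_n\leT B$ for every $n$, monotonicity gives
\[
 \degT{Y_n}=\varphi(\degT{X_n})\leT\varphi(\mathbf b)
\]
for every $n$. Now pick any real $Z$ in the degree $\varphi(\mathbf b)$. No choice is needed here: this is a single existential instantiation, or we may simply assert that some element of $R_\varphi$ works. Then $Z\in R_\varphi$, and $Y_n\leT Z$ for all $n$, so $Z$ is the required upper bound.

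No step is a real obstacle. The only point needing care is that choosing the preimages $X_n$ uses exactly $\ACR$ and nothing stronger, and that the sets $A_n$ are definable without a global lift of $\varphi$. This is the same pattern already used in Lemma~\ref{lem:range}.
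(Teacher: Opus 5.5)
Your proof is correct and follows the paper's argument: choose preimages $X_n$ with $\varphi(\degT{X_n})=\degT{Y_n}$ using $\ACR$, set $\mathbf b=\degT{\bigoplus_n X_n}$, and take any representative of $\varphi(\mathbf b)$, which lies in $R_\varphi$ and bounds every $Y_n$ by monotonicity. One small correction: fixing an enumeration of a given countable set needs no appeal to $\ACR$, since it is a single existential instantiation.
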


\begin{proof}
Given $Y_n\in R_\varphi$, choose $X_n$ with
$\varphi(\degT{X_n})=\degT{Y_n}$. Let
$\mathbf b=\degT{\bigoplus_nX_n}$ and choose one representative
$B$ of $\varphi(\mathbf b)$. Then $B\in R_\varphi$ and
$Y_n\leT B$ for every $n$.
\end{proof}

\begin{remark}\label{rem:nonuniform}
The conclusion of Lemma~\ref{lem:directed} is a pointwise upper bound.
It does not assert that $B$ computes the sequence $(Y_n)$ uniformly,
or that $\bigoplus_nY_n\leT B$. Section~\ref{sec:cofinality}
addresses this distinction by passing to an upper bound that is high
relative to $B$ and computing a suitable covering sequence from it.
\end{remark}

\section{Cofinality from the Groszek--Slaman lemma}
\label{sec:cofinality}

This section works in $\mathsf{ZF}+\ACR$. We prove the
cofinality conclusion needed below using Groszek--Slaman's original
perfect-set lemma and an elementary consequence of relative highness.
No regularity assumption is needed within this section.

For a perfect set $P\subseteq\Cantor$, a point $Y\in P$ is
\emph{eventually constant in $P$} if, for some initial segment $s$ of
$Y$, it is the lexicographically least or greatest member of $P\cap[s]$.
These are precisely the branches that eventually always take the
leftmost or always take the rightmost available continuation in the
perfect tree for $P$.

\begin{theorem}[Groszek--Slaman, {\cite[Lemma~2.2]{GroszekSlaman1998}}]
\label{thm:GS}
Let $P\subseteq\Cantor$ be nonempty and perfect, and let
$B\subseteq P$ be countable and dense, with no member eventually
constant in $P$. Suppose a sequence $(G_i)_{i\in\omega}$ of reals
has range containing $B$. For every $X\in\Cantor$, there are
$Y_0,Y_1\in P$ such that
\begin{equation}\label{eq:GS}
 X\leT\left(\bigoplus_{i\in\omega}G_i\right)\oplus Y_0\oplus Y_1.
\end{equation}
\end{theorem}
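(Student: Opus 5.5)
We plan a finite-extension construction of $Y_0,Y_1\in P$ in which bits of $X$ are coded by where and how the $Y_k$ \emph{peel off} members of $B$. Throughout, write $G=\bigoplus_iG_i$ and let $T$ be the perfect tree with $[T]=P$. The decoder will use only $G\oplus Y_0\oplus Y_1$ and never needs $T$. We build strings $\sigma_0^n\subseteq\sigma_0^{n+1}\subseteq\cdots$ and $\sigma_1^n\subseteq\cdots$ in $T$ and put $Y_k=\bigcup_n\sigma_k^n$. Since every $\sigma_k^n$ lies in $T$ and $P$ is closed, $Y_0,Y_1\in P$. The stages alternate roles: at even steps $Y_0$ carries a bit of $X$ and $Y_1$ carries a length bound, and at odd steps the roles are swapped, or a bound is passed forward to the next stage. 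The two hypotheses on $B$ enter as follows. Density of $B$ in $P$ guarantees a member of $B$ through any node of $T$. Because no member of $B$ is eventually constant in $P$, every $Z\in B$ passes through infinitely many splitting nodes of $T$ at which it turns left and infinitely many at which it turns right.

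\emph{Coding step.} Suppose $\sigma_0$ is current and we want to code $X(n)$. By density choose the least $i$ with $G_i\in B$ and $\sigma_0\subseteq G_i$. For each $k<i$ with $G_k\neq G_i$, the agreement of $G_k$ with $G_i$ is finite. Take $L$ larger than all of these agreement lengths. Using non-eventual-constancy, pick a splitting node $\tau\subseteq G_i$ of $T$ with $|\tau|\ge L$ at which $G_i$ turns in direction $1-X(n)$. Extend $\sigma_0$ to $\tau^\frown X(n)$, which lies in $T$ because $\tau$ splits, and continue to some longer string in $T$. Then $\tau^\frown X(n)$ leaves $G_i$ exactly at position $|\tau|$, and the bit taken there is $X(n)$. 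Meanwhile $Y_1$ must record $L$, or some number at least $L$, recoverably. We do this in the same way: choose a member of $B$ through $\sigma_1$ that is a $G_j$, follow it to a length that codes $L$, and peel off at a splitting node. The decoder reads this length once it knows $j$ and the departure point, which in turn is supplied by the bound coming from $Y_0$ at the previous stage.

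\emph{Decoding.} Inductively, the decoder knows $\sigma_0^n,\sigma_1^n$ and the bound $L$. It searches for the least $k$ such that $G_k$ agrees with $Y_0$ on $[0,L)$. By the choice of $L$ this $G_k$ equals $G_i$ as a real, even if the index differs. The decoder then computes the first position $p$ where $Y_0$ and $G_k$ disagree and outputs $X(n)=Y_0(p)$. It also recovers $\sigma_0^{n+1}$, since the extension of $Y_0$ beyond $\tau^\frown X(n)$ can be taken to stop at a length marked by the next peel-off, and it updates the bound using $Y_1$ symmetrically. All of this is computable in $G\oplus Y_0\oplus Y_1$, which gives \eqref{eq:GS}.

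The main obstacle is the bootstrapping of the indices. Testing ``$G_k\in B$'' is not computable, and the relevant index is never written down explicitly. The plan therefore makes each $Y_k$ supply the agreement bound needed to identify the guide of the other, and the stage bookkeeping has to be arranged so that this mutual dependence is well-founded rather than circular. Concretely, the bound for stage $n+1$ is always committed at stage $n$ and read before it is used. The remaining checks are routine: every extension stays in $T$, and the extensions can always be found, by density and non-eventual-constancy.
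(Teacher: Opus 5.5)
The paper gives no proof of this statement; it imports it from Groszek--Slaman. Your peel-off mechanism is the standard argument for it: follow a guide in $B$, leave it at a splitting node, and let the direction carry a bit while the position serves as the identification bound for the other real's guide. The mechanism is sound, but the bookkeeping you single out as the main obstacle is exactly where the sketch, as written, fails. The number $L'$ that the decoder uses to identify the guide $G_i$ of $Y_0$ must satisfy two inequalities. It must satisfy $L'\ge L$, so that every $G_k$ with $k<i$ and $G_k\neq G_i$ is excluded, and $L'\le|\tau|$, so that $G_i$ itself still agrees with $Y_0$ below $L'$. Since $Y_1$ can leave its guide only at a splitting node, the number it records is in general some $L'>L$. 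Your coding step fixes $\tau$ using only $|\tau|\ge L$ and then asks $Y_1$ to record $L$ ``or some number at least $L$''; nothing ties that number to $|\tau|$. If $L'>|\tau|$, the search for the least $k$ with $G_k\upharpoonright L'=Y_0\upharpoonright L'$ skips $G_i$. It may then return another real of the sequence, which is allowed to lie outside $P$, and decode a wrong bit, or it may not halt. The same two-sided constraint governs the bound that $Y_0$ passes to the guide of $Y_1$. Together these constraints force a definite order of choices, which the phrase ``committed at stage $n$ and read before it is used'' does not supply.

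One schedule that works is as follows. For $\sigma\in T$ let $g(\sigma)$ be the least $i$ with $G_i\in B$ and $\sigma\subseteq G_i$. Let $\ell(\sigma)$ be a length by which $G_{g(\sigma)}$ differs from every $G_k\neq G_{g(\sigma)}$ with $k<g(\sigma)$. At stage $n$, a number $p_0^{n-1}\ge\ell(\sigma_1^n)$ has already been fixed.
\begin{enumerate}
\item Let $Y_1$ leave $G_{g(\sigma_1^n)}$ at a splitting node of length $p_1^n\ge\max(p_0^{n-1},\ell(\sigma_0^n),|\sigma_0^n|,|\sigma_1^n|)$; this fixes $\sigma_1^{n+1}$.
\item Let $Y_0$ leave $G_{g(\sigma_0^n)}$ in direction $X(n)$ at a splitting node of length $p_0^n\ge\max(p_1^n,\ell(\sigma_1^{n+1}))$. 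Let $\sigma_0^{n+1}$ be the one-bit extension at that node, with no further continuation.
\end{enumerate}
A decoder holding $p_0^{n-1}$ takes the least $k$ with $G_k\upharpoonright p_0^{n-1}=Y_1\upharpoonright p_0^{n-1}$. Then $G_k=G_{g(\sigma_1^n)}$, and the first disagreement of $Y_1$ with $G_k$ is at $p_1^n$. The same procedure with $p_1^n$ and $Y_0$ yields $p_0^n$ and $X(n)=Y_0(p_0^n)$. Thus $Y_1$ carries only bounds, and no alternation of roles is needed. The decoder never reconstructs the strings $\sigma_k^n$. It follows the chain $p_0^{-1}\le p_1^0\le p_0^0\le p_1^1\le\cdots$, starting from the single nonuniform number $p_0^{-1}=\ell(\emptyset)$. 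With this order made explicit, your argument proves the statement.
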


The sequence in Theorem~\ref{thm:GS} may contain additional reals
outside $P$. This feature is essential below: we construct uniform
sequences containing every real computable from a given oracle, rather
than compute an enumeration of the prescribed dense set itself.

The following lemma is due to Jockusch. We include a proof for completeness.

\begin{lemma}[Uniform covering from relative highness, Jockusch {\cite[Theorem~1]{Jockusch1972}}]
\label{lem:high-cover}
Suppose $C,D\in\Cantor$ satisfy $C\leT D$ and $C''\leT D'$.
There is a sequence $(G_i)_{i\in\omega}$ with
$\bigoplus_iG_i\leT D$ whose range contains every $C$-computable
real in $\Cantor$.
\end{lemma}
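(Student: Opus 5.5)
The idea is to let $D$ uniformly guess, for each index $e$, a candidate for the total function $\Phi_e^C$, using a $D$-computable approximation to the $\Sigma_2$-in-$C$ totality predicate. Every $C$-computable real in $\Cantor$ has the form $\Phi_e^C$ for some $e$ with $\Phi_e^C$ total and $\{0,1\}$-valued. The set
\[
 \mathrm{Tot}^C=\{e:\Phi_e^C \text{ is total and } \{0,1\}\text{-valued}\}
\]
is $\Pi_2$ in $C$, hence computable from $C''$. By the hypothesis $C''\leT D'$, it is therefore computable from $D'$. The limit lemma relative to $D$ then gives a $D$-computable function $h(e,s)$ such that $\lim_s h(e,s)$ exists for every $e$ and equals $1$ exactly when $e\in\mathrm{Tot}^C$.

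A plain limit approximation is awkward to use directly, because the guess may flip back and forth. So I would build the sequence with a restart device: for each pair $\langle e,t\rangle$, produce one real $G_{\langle e,t\rangle}$. The real $G_{\langle e,t\rangle}$ is meant to equal $\Phi_e^C$ whenever $t$ is a stage after which $h(e,\cdot)$ has settled at $1$.

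The definition of $G_{\langle e,t\rangle}(n)$ is computed from $D$, which is possible since $C\leT D$ gives oracle access to $C$. Search for the least stage $s\ge t$ that satisfies one of two conditions:
\begin{itemize}
 \item $\Phi_{e,s}^C(n)$ has converged with value in $\{0,1\}$ and $h(e,s')=1$ for all $t\le s'\le s$. In this case output that value.
 \item $h(e,s)=0$. In this case output $0$.
\end{itemize}
Totality of $G_{\langle e,t\rangle}$ is the step needing care, since the search must always terminate. If $h(e,\cdot)$ never takes the value $0$ after $t$, then $h(e,\cdot)$ converges to $1$, so $e\in\mathrm{Tot}^C$ and $\Phi_e^C(n)$ converges, and the first condition eventually holds. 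Otherwise some later $h(e,s)=0$ ends the search. Either way every $G_{\langle e,t\rangle}$ is total and $\{0,1\}$-valued, and the map $(\langle e,t\rangle,n)\mapsto G_{\langle e,t\rangle}(n)$ is $D$-computable. Hence $\bigoplus_iG_i\leT D$.

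To check coverage, let $Z=\Phi_e^C$ be any $C$-computable real. Then $e\in\mathrm{Tot}^C$, so there is $t$ with $h(e,s)=1$ for all $s\ge t$. For this $t$, the second condition never fires, and the search halts exactly at convergence of $\Phi_e^C(n)$. Therefore $G_{\langle e,t\rangle}=Z$, so the range of $(G_i)$ contains every $C$-computable real.

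The main obstacle is exactly this totality issue. Naively outputting $\Phi_e^C$ fails, because $D$ cannot decide totality. The fix is to make each sequence member abort to $0$ as soon as the guess changes, while arranging that the member indexed by the correct settling stage follows $\Phi_e^C$ faithfully. No choice is used, so the argument works in $\mathsf{ZF}$.
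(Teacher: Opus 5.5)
Your proof is correct, but it uses a different decomposition from the paper's. Both arguments start from the same input. Totality of $\Phi_e^C$ is $\Pi_2^C$, hence computable from $C''\le_T D'$, and so it has a $D$-computable limit approximation.

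The paper spends this approximation on the relativized direction of Martin's high domination theorem. It first builds one $D$-computable function that eventually dominates every total $C$-computable function. It then lists the reals $G_{e,\sigma}$ obtained by running $\Phi_e^C(n)$ for at most that many steps, with a finite string $\sigma$ patching the inputs on which the bound has not yet overtaken the running time.

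You skip domination entirely. You build binary-valuedness into the $\Pi_2^C$ predicate, index the list by a guessed settling stage $t$ of your approximation $h(e,\cdot)$, and abort to $0$ at the first stage $s\ge t$ with $h(e,s)=0$. Your totality and coverage checks are sound. The two search clauses are mutually exclusive, and if no abort occurs after $t$ then the limit is $1$, so $\Phi_e^C(n)$ does converge with a binary value.

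Your route buys brevity: there are no running-time functions, no patches, and no uniform bound to construct. The paper's route buys a reusable by-product: a single $D$-computable dominating function that serves all indices at once. That is the form of Martin's theorem the paper cites as the link between its two applications of Groszek--Slaman.
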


\begin{proof}
We first give the needed direction of Martin's high domination theorem
\cite{Martin1966}, relativized to $C$. Let
\[
 \mathrm{TOT}^{C}
 =\{e:\Phi_e^C\text{ is a total function from }\omega\text{ to }\omega\}.
\]
Since $\mathrm{TOT}^{C}\leT C''\leT D'$, the relativized limit
lemma gives a total $D$-computable function
$p:\omega^2\to\{0,1\}$ such that
\[
 \lim_s p(e,s)=\chi_{\mathrm{TOT}^{C}}(e).
\]
For each $n$, let $s_n$ be the least $s\ge n$ such that
\[
 \forall e\le n\,
 \bigl(p(e,s)=1\ \Longrightarrow\ \Phi_e^C(n)[s]\mathord\downarrow\bigr).
\]
This search is computable from $D$, since $C\leT D$. It terminates:
the finitely many approximations with $e\le n$ eventually stabilize,
after which the indices still marked $1$ all give total computations.
Define
\[
 h(n)=1+\max\bigl(\{n\}\cup
       \{\Phi_e^C(n):e\le n\text{ and }p(e,s_n)=1\}\bigr).
\]
Then $h\leT D$. If $\Phi_e^C$ is total, choose $N\ge e$ such
that $p(e,s)=1$ for every $s\ge N$. For $n\ge N$, we have
$s_n\ge n\ge N$, so $h(n)>\Phi_e^C(n)$. Thus $h$ eventually
dominates every total $C$-computable function. This argument places
no computable-enumerability assumption on $C$ or $D$.

For each $e\in\omega$ and $\sigma\in2^{<\omega}$, define
\[
 G_{e,\sigma}(n)=
 \begin{cases}
  \sigma(n),&n<|\sigma|,\\
  \Phi_e^C(n),&n\ge|\sigma|\text{ and }\Phi_e^C(n)
       \text{ halts within }h(n)\text{ steps with a binary output},\\
  0,&\text{otherwise}.
 \end{cases}
\]
These are total binary reals uniformly computable from $D$.
If $Z=\Phi_e^C\in\Cantor$, the running-time function of this
computation is total and $C$-computable. Hence there is $N$ such
that the computation on input $n$ halts within $h(n)$ steps whenever
$n\ge N$. Taking $\sigma=Z\upharpoonright N$ gives
$G_{e,\sigma}=Z$. Enumerate the pairs $(e,\sigma)$ in a fixed
computable order to obtain the required sequence.
\end{proof}

The following conclusion is Lutz--Siskind's cofinality corollary.
The proof here derives it from Theorem~\ref{thm:GS} and
Lemma~\ref{lem:high-cover}.

\begin{corollary}[{\cite[Corollary~4.5]{LutzSiskind2025}}]
\label{cor:perfect-directed}
If $A\subseteq\Cantor$ contains a nonempty perfect set and every
countable subset of $A$ has a Turing upper bound in $A$, then $A$
is cofinal in the Turing degrees.
\end{corollary}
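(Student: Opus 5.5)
The plan is to combine the two tools of this section in the order the remark after Lemma~\ref{lem:directed} suggests. Fix a nonempty perfect $P\subseteq A$ and an arbitrary $X\in\Cantor$. I will show that some member of $A$ computes $X$; this is exactly cofinality.

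\textbf{Step 1: a dense set in $P$.} Choose a countable dense $B_0\subseteq P$ none of whose members is eventually constant in $P$. No choice is needed here. Fix the perfect tree $T$ of $P$, which is a single real. For each node $s\in T$ define canonically a branch through $s$ that, after $s$, alternates left and right at splitting nodes. Such a branch is not eventually constant, and the collection of these branches is dense in $P$. The sequence $(Z_s)_{s\in T}$ is even uniformly computable from $T$.

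\textbf{Step 2: first upper bound.} By hypothesis the countable set $B_0\subseteq A$ has an upper bound $C\in A$. Then every member of $B_0$ is $C$-computable, though perhaps not uniformly, as Remark~\ref{rem:nonuniform} warns. Setting $C_1=C\oplus T$ would give uniformity, but $C_1$ need not lie in $A$, so I instead pass to a high degree.

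\textbf{Step 3: a relatively high upper bound in $A$.} This is the main obstacle. The set $A$ is only countably directed and need not be closed under jump. The idea is to apply Theorem~\ref{thm:GS} a first time to code $C''$. Take any sequence whose range contains $B_0$, for instance $(Z_s)$ together with $C$, packaged as $G=T\oplus C$. Theorem~\ref{thm:GS} yields $Y_0,Y_1\in P\subseteq A$ with $C''\leT (T\oplus C)\oplus Y_0\oplus Y_1$. Using countable directedness again (which needs $T$ bounded in $A$: include $T$, or a member of $P$ computing it, among the reals to be bounded), pick $D\in A$ above $C,Y_0,Y_1$ and $T$. Then $C\leT D$, and $C''\leT D\leT D'$.

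\textbf{Step 4: uniform covering and the second application.} Apply Lemma~\ref{lem:high-cover} to the pair $C\leT D$ to get $(G_i)$ with $\bigoplus_i G_i\leT D$ whose range contains every $C$-computable real. In particular the range contains $B_0$, and it does so regardless of non-uniformity. Now apply Theorem~\ref{thm:GS} to this sequence and the given $X$. This yields $Y_0',Y_1'\in P\subseteq A$ with $X\leT D\oplus Y_0'\oplus Y_1'$. A final use of directedness gives $E\in A$ above $D,Y_0',Y_1'$, and therefore $X\leT E$.

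The delicate point is Step 3. In the first application I must make sure the sequence's range really contains a dense non-eventually-constant set while keeping its join inside something bounded in $A$. If using $T$ causes trouble, I will replace $B_0$ by a set of $C$-computable reals and feed Theorem~\ref{thm:GS} the trivially covering sequence given by $C$ itself together with $T$, checking that all the reals to be bounded lie in $A$ or are computable from members of $A$.
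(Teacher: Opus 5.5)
There is a genuine gap in Step~3. Your bound $C''\leT D$ requires some $D\in A$ with $T\leT D$, and the hypotheses do not supply one.

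\begin{itemize}
\item $T$ is an arbitrary real, and countable directedness only bounds countable subsets of $A$. The statement ``some member of $A$ computes $T$'' is itself an instance of the cofinality you are proving.
\item The fallback of a member of $P$ computing $T$ is not available in general. A routine finite-extension construction gives perfect trees none of whose paths computes the tree.
\item A quick diagnostic: if you had $D\in A$ with $T\leT D$, Step~4 would be superfluous. One application of Theorem~\ref{thm:GS} to the $T$-uniform sequence $(Z_s)$ with target $X$ would already finish.
\item Your last-resort ``covering sequence given by $C$ itself'' does not exist. By diagonalization, no $C$-computable sequence lists all $C$-computable reals.
\end{itemize}

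The missing idea is to pay one jump for uniformity and recover it through relative highness. Put $H_e(n)=\Phi_e^C(n)$ if this computation halts with a binary output, and $H_e(n)=0$ otherwise. Then $\bigoplus_eH_e\leT C'$. The range of $(H_e)$ contains $B_0$ simply because $C$ computes each member of $B_0$, uniformly or not.

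Apply Theorem~\ref{thm:GS} with this sequence and target $C''$. This gives $Y_0,Y_1\in P$ with $C''\leT C'\oplus Y_0\oplus Y_1$. Let $D\in A$ bound $C,Y_0,Y_1$, all of which lie in $A$. Then $C'\leT D'$ and $Y_0,Y_1\leT D'$, so $C''\leT D'$.

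This is weaker than the $C''\leT D$ you claimed, but it is exactly the hypothesis of Lemma~\ref{lem:high-cover}. Your Step~4 then goes through unchanged. Step~1 is fine, and it even avoids $\ACR$ in choosing the dense set. Its $T$-uniformity plays no role once the argument is repaired.
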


\begin{proof}
Choose a perfect $P\subseteq A$. The points eventually constant in
$P$ form a countable set: each is the least or greatest point of
$P\cap[s]$ for some $s\in2^{<\omega}$. Every nonempty
$P\cap[s]$ is perfect and uncountable. Using $\ACR$, choose a
countable dense set $B\subseteq P$ avoiding all these points.
Let $C\in A$ compute every member of $B$.

\emph{First application of Groszek--Slaman.}
For each oracle-program index $e$, put
\[
 H_e(n)=
 \begin{cases}
  \Phi_e^C(n),&\text{if this computation halts with a binary output},\\
  0,&\text{otherwise}.
 \end{cases}
\]
The sequence $(H_e)$ is uniformly $C'$-computable and contains every
$C$-computable binary real, hence contains $B$. Apply
Theorem~\ref{thm:GS} with this sequence and target $C''$ to obtain
$U_0,U_1\in P$ such that
\begin{equation}\label{eq:GS-highness}
 C''\leT\left(\bigoplus_eH_e\right)\oplus U_0\oplus U_1
       \leT C'\oplus U_0\oplus U_1.
\end{equation}
Choose an upper bound $D\in A$ for $C,U_0,U_1$. Then
$C\leT D$ and $C''\leT D'$. Lemma~\ref{lem:high-cover}
therefore supplies a uniformly $D$-computable sequence $(G_i)$
containing every $C$-computable real, and in particular containing $B$.

\emph{Second application of Groszek--Slaman.}
Given any target $X\in\Cantor$, apply Theorem~\ref{thm:GS}
again, now with $(G_i)$. There are $V_0,V_1\in P$ such that
\[
 X\leT\left(\bigoplus_iG_i\right)\oplus V_0\oplus V_1
       \leT D\oplus V_0\oplus V_1.
\]
An upper bound in $A$ for $D,V_0,V_1$ computes $X$. Thus $A$
is cofinal.
\end{proof}

\begin{remark}[The role of highness and of the dense set]
The first application obtains $C''\leT D'$, not $C'\leT D$.
Relative highness nevertheless makes a covering sequence computable
from $D$, which is exactly what the second application needs. The
set $A$ need not be closed under Turing jump or under actual joins;
upper bounds in $A$ suffice. Also, we choose the dense set $B$
before choosing its bound $C$, so the restriction on eventually
constant branches is satisfied. The argument does not require the
stronger coding statement for an arbitrary prescribed dense set.
\end{remark}

\section{A continuous decoder and the proof of the main theorem}
\label{sec:main-proof}

This section is closely related to the continuous-decoder argument
associated with Kihara in \cite[Section~4.3]{LutzSiskind2025}.

\begin{proposition}[Continuous coding of a perfect family of outputs]
\label{prop:factorization}
Assume $\PSP+\CU$, and let $\varphi:\DT\to\DT$ have
uncountable saturated real range. There are continuous functions
$\alpha,\beta:\Cantor\to\Cantor$, with $\beta$ injective,
such that
\begin{equation}\label{eq:factorization}
 \varphi(\degT{\alpha(X)})=\degT{\beta(X)}
 \qquad(X\in\Cantor).
\end{equation}
No order-preservation assumption is needed in this proposition.
\end{proposition}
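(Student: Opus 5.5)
Since $R_\varphi$ is uncountable, $\PSP$ gives a nonempty perfect set $P\subseteq R_\varphi$. By Lemma~\ref{lem:cantor-subset} there is a continuous injection $\beta_0:\Cantor\to P$. Continuity is preserved by composition, and so is injectivity. So it suffices to find a comeager $G_\delta$ set $C\subseteq\Cantor$, a continuous injection $\iota:\Cantor\to C$, and a continuous $g:C\to\Cantor$ with
\[
 \varphi(\degT{g(Z)})=\degT{\beta_0(Z)}\qquad(Z\in C).
\]
Then we may take $\alpha=g\circ\iota$ and $\beta=\beta_0\circ\iota$.

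To obtain $g$, apply $\CU$ to the relation
\[
 S(Z,X)\iff \varphi(\degT{X})=\degT{\beta_0(Z)}.
\]
This relation is total. Indeed, for each $Z$ we have $\beta_0(Z)\in P\subseteq R_\varphi$, so $\degT{\beta_0(Z)}\in\ran(\varphi)$, and hence some $X$ satisfies $S(Z,X)$. No choice is needed to state $S$: it is defined from $\varphi$ on degrees, which is exactly why the saturated range $R_\varphi$ was introduced.

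$\CU$ then yields a comeager $G_\delta$ set $C$ and a continuous $g:C\to\Cantor$ with $S(Z,g(Z))$ for all $Z\in C$. By the second half of Lemma~\ref{lem:cantor-subset}, $C$ contains the image of a continuous injection $\iota:\Cantor\to C$. Composing as above, $\alpha=g\circ\iota$ is continuous on $\Cantor$, and $\beta=\beta_0\circ\iota$ is a continuous injection. For every $X$ we get $\varphi(\degT{\alpha(X)})=\degT{\beta(X)}$, which is \eqref{eq:factorization}. Order preservation is never used.

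The only delicate point is checking that the hypotheses line up. $\PSP$ is stated for subsets of $\Cantor$, and $R_\varphi$ is such a subset. $\CU$ requires totality on all of $\Cantor$, which is why we first reparametrize $P$ by $\Cantor$ via $\beta_0$ rather than working with $P$ directly. Continuity of $g$ is only on $C$, so the injection into $C$ is what makes $\alpha$ continuous on the whole of $\Cantor$. I expect no real obstacle beyond this bookkeeping.
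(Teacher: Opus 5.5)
Your proof is correct and follows the paper's own argument essentially step for step. You choose a perfect $P\subseteq R_\varphi$ parametrized by a continuous injection, apply $\CU$ to the relation $\varphi(\degT{U})=\degT{h(X)}$, and then precompose with a continuous injection of $\Cantor$ into the comeager $G_\delta$ domain to get $\alpha=g\circ q$ and $\beta=h\circ q$.
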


\begin{proof}
Choose a perfect $P\subseteq R_\varphi$ and a continuous
injection $h:\Cantor\to P$, using
Lemma~\ref{lem:cantor-subset}. The relation
\[
 S(X,U)\quad\Longleftrightarrow\quad
 \varphi(\degT{U})=\degT{h(X)}
\]
has a nonempty section at every $X$, because $h(X)\in R_\varphi$.
By $\CU$, there are a comeager $G_\delta$ set $C$ and a
continuous $g:C\to\Cantor$ with
\[
 \varphi(\degT{g(X)})=\degT{h(X)}\qquad(X\in C).
\]
Choose a continuous injection $q:\Cantor\to C$ and set
$\alpha=g\circ q$ and $\beta=h\circ q$. These maps have the
required properties.
\end{proof}

\begin{proposition}[Domination with one fixed code]\label{prop:fixed-code}
Assume $\PSP+\CU$. If $\varphi$ is order preserving and
$R_\varphi$ is uncountable, there is $\mathbf k$ satisfying
\eqref{eq:main-code}.
\end{proposition}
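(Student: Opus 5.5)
We use Proposition~\ref{prop:factorization} to get continuous $\alpha,\beta:\Cantor\to\Cantor$, with $\beta$ injective, such that $\varphi(\degT{\alpha(X)})=\degT{\beta(X)}$ for every $X$. Since $\beta$ is a continuous injection on the compact space $\Cantor$, it is a homeomorphism onto its image $Q=\beta[\Cantor]$. So $\beta^{-1}:Q\to\Cantor$ is continuous, and $\alpha\circ\beta^{-1}:Q\to\Cantor$ is continuous. Lemma~\ref{lem:code} applies to $\alpha$ and to $\alpha\circ\beta^{-1}$ (and also to $\beta$). We merge the resulting oracles into one real $K$ and fix indices $e_0,e_1$ with
\[
 \alpha(X)=\Phi_{e_0}^{K\oplus X},\qquad
 \alpha(\beta^{-1}(Y))=\Phi_{e_1}^{K\oplus Y}\quad(Y\in Q).
\]
Thus $K$ is the ``single code'' for both the input construction $X\mapsto\alpha(X)$ and the output decoding $\beta(X)\mapsto\alpha(X)$. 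We then set $\mathbf k=\degT{K}$, possibly enlarged later.

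Now take $\mathbf d\geT\mathbf k$ with a representative $D\geT K$. We want to show $D\leT K\oplus Z$ for some $Z\in\varphi(\mathbf d)$. The idea is to choose $X$ with $\alpha(X)\equivT D$, or at least $D\leT \alpha(X)\leT$ something related to $\mathbf d$. Then monotonicity gives $\beta(X)\in\varphi(\degT{\alpha(X)})$, and the decoder gives $\alpha(X)\leT K\oplus\beta(X)$. There are two ways to arrange this. The naive choice $X=D$ gives $\alpha(D)\leT K\oplus D\equivT D$, but we need $D\leT \alpha(X)$. So we plan instead to modify $\alpha$ so that it preserves its input. We replace $\alpha$ by $\alpha'(X)=\alpha(X)\oplus X$ before the decoding step. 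Concretely, we apply Proposition~\ref{prop:factorization} to the relation $S$ with $U$ ranging over reals, and then compose. The cleaner route is to use order preservation: $\varphi(\degT{\alpha(X)})\leT\varphi(\degT{\alpha(X)\oplus X\oplus K})$. We will choose $X$ so that $\alpha(X)\oplus X\oplus K\equivT D$, namely $X=D$, and give $\mathbf e=\mathbf d$. Then $\varphi(\mathbf d)\geT\degT{\beta(D)}$, so
\[
 \alpha(D)\leT K\oplus\beta(D)\leT K\oplus\varphi(\mathbf d).
\]
This yields only $\alpha(D)$, not $D$, so something more is needed.

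The main obstacle is recovering $D$ itself rather than $\alpha(D)$. My plan is to precompose with a coding map before applying the decoder. We choose the continuous injection $q$ in Proposition~\ref{prop:factorization} (through Lemma~\ref{lem:cantor-subset}) so that $X$ is computable from $K\oplus q(X)$. We also want $X$ computable from $K\oplus\beta(X)$, which $\beta^{-1}$ provides. The key point is that $\beta^{-1}$ is continuous on $Q$, so $X=\beta^{-1}(\beta(X))\leT K\oplus\beta(X)$ once $K$ also codes $\beta^{-1}$. Then for $\mathbf d\geT\mathbf k$ we take $X=D$ and use the input $E=\alpha(D)\oplus D$, whose degree is $\mathbf d$ because $\alpha(D)\leT K\oplus D\equivT D$. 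Monotonicity gives $\beta(D)\leT\varphi(\mathbf d)$, and therefore
\[
 D=\beta^{-1}(\beta(D))\leT K\oplus\beta(D)\leT K\oplus\varphi(\mathbf d),
\]
which is \eqref{eq:main-code}. The step to double-check is that $\beta$ is a homeomorphism onto its image, so that $\beta^{-1}$ is computable relative to a fixed oracle uniformly on $Q$. This follows from compactness of $\Cantor$, which holds in $\mathsf{ZF}$, and from Lemma~\ref{lem:code} applied on the domain $Q$. With this, the decoding of $\alpha$ is not needed at all; only $\beta^{-1}$ and $\alpha$ need to be coded into $K$.
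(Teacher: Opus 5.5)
Your final argument is correct and is essentially the paper's proof: code $\alpha$ and $\beta^{-1}$ into one real $K$, take $D\in\mathbf d$, note $\degT{\alpha(D)}\leT\mathbf d$, so monotonicity gives $\degT{\beta(D)}=\varphi(\degT{\alpha(D)})\leT\varphi(\mathbf d)$, and then $D\leT K\oplus\beta(D)$. The earlier detours (coding $\alpha\circ\beta^{-1}$, the auxiliary input $E=\alpha(D)\oplus D$, and choosing $q$ specially) are unnecessary and can be deleted.
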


\begin{proof}
Take $\alpha,\beta$ from Proposition~\ref{prop:factorization}.
Since $\Cantor$ is compact and $\beta$ is continuous and
injective, its inverse is continuous on the compact image
$\beta[\Cantor]$. Apply Lemma~\ref{lem:code} to $\alpha$
and to this inverse, and join the two codes into a single real
$K$. Then
\begin{equation}\label{eq:two-codes}
 \alpha(X)\leT K\oplus X,
 \qquad X\leT K\oplus\beta(X)
 \qquad(X\in\Cantor).
\end{equation}
Put $\mathbf k=\degT{K}$.

Fix $\mathbf d\geT\mathbf k$, and take any representative
$X\in\mathbf d$. The first inequality in \eqref{eq:two-codes}
gives $\degT{\alpha(X)}\leT\mathbf d$. Hence, by monotonicity
and \eqref{eq:factorization},
\[
 \degT{\beta(X)}=\varphi(\degT{\alpha(X)})
 \leT\varphi(\mathbf d).
\]
The second inequality in \eqref{eq:two-codes} now yields
\[
 \mathbf d\leT\mathbf k\vee\degT{\beta(X)}
 \leT\mathbf k\vee\varphi(\mathbf d).
\]
 
\end{proof}

\begin{corollary}[Countable fibers after normalization]\label{cor:fibers}
Under the hypotheses of Proposition~\ref{prop:fixed-code}, fix
$\mathbf a\geT\mathbf k$. For every degree $\mathbf v$, the set
in \eqref{eq:main-fiber} is at most countable. In particular, any partial
real representative of $X\mapsto\varphi(\mathbf a\vee\degT{X})$
has countable point fibers.
\end{corollary}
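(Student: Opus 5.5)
The plan is to use the fixed-code domination \eqref{eq:main-code} from Proposition~\ref{prop:fixed-code} directly. Fix $\mathbf a\geT\mathbf k$ and a degree $\mathbf v$, and let $F$ be the set in \eqref{eq:main-fiber}. Choose one representative $A\in\mathbf a$ and one representative $V\in\mathbf v$; only these two choices are needed, so no choice principle is involved. For any $X\in F$, put $\mathbf d=\mathbf a\vee\degT{X}$. Then $\mathbf d\geT\mathbf a\geT\mathbf k$, so \eqref{eq:main-code} applies and gives
\[
 \degT{X}\leT\mathbf d\leT\mathbf k\vee\varphi(\mathbf d)=\mathbf k\vee\mathbf v .
\]
Since $\mathbf k\leT\mathbf a$, we also have $\mathbf k\vee\mathbf v\leT\mathbf a\vee\mathbf v$. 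Hence every $X\in F$ satisfies $X\leT A\oplus V$, or simply $X\leT K\oplus V$ for the fixed real $K$.

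So $F$ is a subset of the set of reals computable from the single real $K\oplus V$. That set is countable in $\mathsf{ZF}$: the map $e\mapsto\Phi_e^{K\oplus V}$ (restricted to total binary outputs) is a surjection from a subset of $\omega$ onto it. Consequently $F$ is at most countable. No appeal to $\ACR$ is needed, because the enumeration is explicit: one can inject $F$ into $\omega$ by sending $X$ to the least $e$ with $\Phi_e^{K\oplus V}=X$.

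For the ``in particular'' clause, let $f$ be a partial real function with $\degT{f(X)}=\varphi(\mathbf a\vee\degT{X})$ on its domain. Each point fiber $f^{-1}(\{Y\})$ is contained in the set $F$ for $\mathbf v=\degT{Y}$, and so it is countable.

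There is no real obstacle here. The only point to check is that \eqref{eq:main-code} is applied at $\mathbf d=\mathbf a\vee\degT{X}$, which lies in the cone above $\mathbf k$, and that the bound $\mathbf k\vee\mathbf v$ does not depend on $X$.
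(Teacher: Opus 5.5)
Your proposal is correct and follows the paper's proof: you apply \eqref{eq:main-code} at $\mathbf a\vee\degT{X}\geT\mathbf k$ to obtain $X\leT K\oplus V$ for one fixed representative $V$ of $\mathbf v$, and you conclude that the fiber lies inside the countable set of $(K\oplus V)$-computable reals. Your added remarks, that no choice is needed via the least-index injection and that point fibers of any partial representative lie in such a set, are also correct.
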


\begin{proof}
Fix $Y\in\mathbf v$. Whenever
$\varphi(\mathbf a\vee\degT{X})=\mathbf v$, apply
\eqref{eq:main-code} to $\mathbf a\vee\degT{X}$ to obtain
\[
 \degT{X}\leT\mathbf a\vee\degT{X}
 \leT\mathbf k\vee\mathbf v.
\]
Thus every such $X$ is computable from $K\oplus Y$, and the
set of those reals is countable.
\end{proof}

\begin{lemma}[Removing the fixed code]\label{lem:absorption}
Suppose $\varphi$ is order preserving,
$\mathbf d\leT\mathbf k\vee\varphi(\mathbf d)$ whenever
$\mathbf d\geT\mathbf k$, and there is a degree $\mathbf e$
with $\mathbf k\leT\varphi(\mathbf e)$. Then
\[
 \mathbf d\leT\varphi(\mathbf d)
 \qquad(\mathbf d\geT\mathbf e\vee\mathbf k).
\]
\end{lemma}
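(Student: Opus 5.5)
The plan is a direct chain of reductions using only monotonicity and the hypotheses of the lemma. Fix $\mathbf d\geT\mathbf e\vee\mathbf k$. Since $\mathbf d\geT\mathbf k$, the assumed coding inequality applies to $\mathbf d$ itself and gives
\[
 \mathbf d\leT\mathbf k\vee\varphi(\mathbf d).
\]
So it suffices to show that $\mathbf k\leT\varphi(\mathbf d)$. Then the join $\mathbf k\vee\varphi(\mathbf d)$ collapses to $\varphi(\mathbf d)$, which is the desired conclusion.

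To absorb $\mathbf k$, I use the degree $\mathbf e$. Since $\mathbf e\leT\mathbf d$, order preservation gives $\varphi(\mathbf e)\leT\varphi(\mathbf d)$. Combined with the hypothesis $\mathbf k\leT\varphi(\mathbf e)$, transitivity yields $\mathbf k\leT\varphi(\mathbf d)$. Therefore
\[
 \mathbf d\leT\mathbf k\vee\varphi(\mathbf d)=\varphi(\mathbf d),
\]
since a join of two degrees equals the larger when one is below the other.

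I expect no real obstacle here. The only point to watch is the order of the two uses of the hypotheses. The coding inequality must be applied at $\mathbf d$ itself, which is legitimate because $\mathbf d\geT\mathbf k$. Monotonicity must be applied to the pair $\mathbf e\leT\mathbf d$, which is legitimate because $\mathbf d\geT\mathbf e$. Both requirements are guaranteed by choosing $\mathbf d$ in the cone above $\mathbf e\vee\mathbf k$. No choice principle or regularity assumption is used; the argument works in $\mathsf{ZF}$ for any degree map with these properties.

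In the application to Theorem~\ref{thm:main}(iv), the degree $\mathbf e$ will come from cofinality of the range, via Corollary~\ref{cor:perfect-directed} together with Lemma~\ref{lem:directed}. That supplies some $\mathbf e$ with $\mathbf k\leT\varphi(\mathbf e)$, and then $\mathbf b=\mathbf e\vee\mathbf k$ works.
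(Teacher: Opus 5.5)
Your proof is correct and matches the paper's argument: monotonicity along $\mathbf e\leT\mathbf d$ gives $\mathbf k\leT\varphi(\mathbf e)\leT\varphi(\mathbf d)$, and then the coding inequality at $\mathbf d$ gives $\mathbf d\leT\mathbf k\vee\varphi(\mathbf d)=\varphi(\mathbf d)$. Your remark on how $\mathbf e$ is supplied in the proof of Theorem~\ref{thm:main} also agrees with the paper.
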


\begin{proof}
For such $\mathbf d$, monotonicity gives
$\mathbf k\leT\varphi(\mathbf e)\leT\varphi(\mathbf d)$.
Therefore
$\mathbf d\leT\mathbf k\vee\varphi(\mathbf d)=\varphi(\mathbf d)$.
\end{proof}

\begin{proof}[Proof of Theorem~\ref{thm:main}]
If $R_\varphi$ is countable, Lemma~\ref{lem:range} gives
constancy on a cone. Suppose it is uncountable. Proposition
\ref{prop:fixed-code} gives $\mathbf k$ and
\eqref{eq:main-code}; Corollary~\ref{cor:fibers} gives the
fiber assertion. By $\PSP$, the set $R_\varphi$ contains a
perfect subset, and Lemma~\ref{lem:directed} makes it countably
directed. Corollary~\ref{cor:perfect-directed} therefore makes
$R_\varphi$, and hence the degree range of $\varphi$, cofinal.
Choose $\mathbf e$ with $\mathbf k\leT\varphi(\mathbf e)$ and put
$\mathbf b=\mathbf e\vee\mathbf k$. Then $\mathbf b\geT\mathbf k$,
and Lemma~\ref{lem:absorption} proves \eqref{eq:main-domination}
on $\cone{\mathbf b}$, establishing (iv) and the asserted dichotomy.
\end{proof}

\begin{remark}[Only the decoder needs to be reached]\label{rem:single-target}
Full cofinality is more than is needed to apply
Lemma~\ref{lem:absorption}. In the proof of
Corollary~\ref{cor:perfect-directed}, take $A=R_\varphi$ and
use $X=K$ in the second application of Groszek--Slaman. The resulting
upper bound $E\in R_\varphi$ computes $K$. By the definition of
$R_\varphi$, some degree $\mathbf e$ satisfies
$\varphi(\mathbf e)=\degT{E}$, and hence
$\mathbf k\leT\varphi(\mathbf e)$. This is precisely the
hypothesis needed for absorption. Only order preservation is used;
no equality expressing preservation of joins is required.
\end{remark}

\begin{remark}[Choice and regularity]
Countable choice for reals supplies the preimage choices in
Lemmas~\ref{lem:range} and \ref{lem:directed} and the choice of a
dense set avoiding eventually constant branches in
Corollary~\ref{cor:perfect-directed}.
The continuous maps and their real codes are obtained by constructions
on a fixed countable basis. The proof makes no appeal to the cone
dichotomy, to a Slaman--Woodin representation theorem, or to a global
real lift of $\varphi$. Its perfect-set coding input is the original
Groszek--Slaman lemma, Theorem~\ref{thm:GS}; the relative highness
argument needed to apply it twice is proved in
Lemma~\ref{lem:high-cover}.
\end{remark}

\section{Determinacy and the Solovay model}
\label{sec:applications}
We obtain the following theorem of Lutz and Siskind.
\begin{theorem}[Lutz and Siskind \cite{LutzSiskind2025}]\label{cor:AD}
In $\mathsf{ZF}+\mathsf{AD}+\DCR$, every order-preserving
map $\varphi:\DT\to\DT$ is constant on a cone or above the
identity on a cone.\footnote{$\mathsf{ZF}+\mathsf{AD}+\DCR$  is the assumption in \cite{LutzSiskind2025}. Actually $\mathsf{ZF}+\mathsf{AD}$ is sufficient. Also what they prove is for a degree invariant function $\varphi$.}
\end{theorem}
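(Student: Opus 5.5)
The plan is to derive the statement from Theorem~\ref{thm:main}: it suffices to verify that $\mathsf{ZF}+\mathsf{AD}+\DCR$ proves $\ACR$, $\PSP$ and $\CU$. Once these three hold, Theorem~\ref{thm:main} applies verbatim to an arbitrary order-preserving $\varphi:\DT\to\DT$. No lift of $\varphi$ to reals is needed, because the theorem is stated on the quotient.

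The verification of the three principles proceeds as follows.
\begin{enumerate}
 \item $\ACR$ is immediate from $\DCR$; it is also provable from $\mathsf{AD}$ alone by the standard game argument, in which player~II plays a witness for the coordinate named by player~I.
 \item $\PSP$ is the classical Morton Davis theorem. Under $\mathsf{AD}$, every set of reals has the perfect set property, via the $*$-game. I would quote this, transferred to $\Cantor$.
 \item For $\CU$, I use two facts under $\mathsf{AD}$. First, every set of reals has the Baire property. Second, every total relation $S\subseteq\Cantor\times\Cantor$ has a uniformizing function on a comeager set. The second fact is the Baire-category uniformization theorem: under $\mathsf{AD}$, every relation with full domain admits a uniformizing function whose restriction to some comeager set is Baire measurable, and indeed continuous there. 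This can be proved using the Banach--Mazur game, or via $\mathsf{AD}\Rightarrow$ ``every function $\Cantor\to\Cantor$ is continuous on a comeager set'' combined with countable choice to pick a partial uniformization.
\end{enumerate}

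The concrete route for $\CU$ is the following. Consider the game in which player~I plays $X$ bit by bit, interleaved with Banach--Mazur moves, and player~II plays $Y$, winning if $S(X,Y)$. Player~I cannot win this game, because $S$ has full domain and II could otherwise respond (this is the standard Kechris-style ``comeager uniformization'' argument). A strategy for II then yields a continuous $g$ defined on a comeager $G_\delta$ set with $S(X,g(X))$.

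Alternatively, one can take any uniformizing function on a comeager set, which the game gives, and apply Lemma~\ref{lem:continuous-restriction}. This uses the fact that under $\mathsf{AD}$ every function is Baire measurable, since preimages of open sets have the Baire property. That yields a comeager $G_\delta$ set on which $g$ is continuous.

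I expect the main obstacle to be this $\CU$ step. The existence of a uniformization with comeager domain under $\mathsf{AD}$ requires care in choosing the game so that it produces a comeager set rather than merely a nonmeager one. I would first try the known theorem (Kechris, ``$\mathsf{AD}$ and projective ordinals''/Moschovakis) that under $\mathsf{AD}$ every relation $S$ with full domain is uniformized on a comeager set by a continuous function, and cite it. Localizing to basic open sets and using $\ACR$ to glue the countably many pieces handles the passage from ``nonmeager in every open set'' to ``comeager''.
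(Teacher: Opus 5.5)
Your reduction matches the paper's proof. You get $\ACR$ from $\DCR$, $\PSP$ from determinacy, and $\CU$ from comeager uniformization under $\mathsf{AD}$, with the domain shrunk to a comeager $G_\delta$; then you apply Theorem~\ref{thm:main}. The paper cites \cite[Fact~2.22]{ChanMeehan2018} for the uniformization.

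The $\CU$ step, however, has to rest on that citation. Your in-line game sketch does not prove it.

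\begin{itemize}
 \item In the unfolded Banach--Mazur game, a winning strategy for II yields a continuous uniformization on a comeager $G_\delta$. Conversely, such a uniformization yields a winning strategy for II. So under $\mathsf{AD}$, the assertion that I cannot win is equivalent to the statement you are trying to prove.
 \item That assertion does not follow from totality of $S$. Fixing $Y$ in advance against a winning strategy for I (with first move $s_0$) only shows that $\{X\in[s_0]:S(X,Y)\}$ is meager for every $Y$. This is compatible with totality, e.g.\ for $S(X,Y)\iff X=Y$. Exploiting II's adaptive choice of $Y$ is the whole difficulty, and that is the content of the cited theorem.
 \item Your alternative of using countable choice to pick a partial uniformization also fails. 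Countable choice cannot select witnesses on a comeager, hence uncountable, set.
\end{itemize}

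Once the cited theorem supplies a uniformizer on a comeager set, the rest goes through. Either your appeal to Lemma~\ref{lem:continuous-restriction} finishes the argument (every set has the Baire property, and $\ACR$ chooses the meagerness witnesses), or the paper's covering of the exceptional set by a meager $F_\sigma$ does.
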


\begin{proof}
Dependent choice for reals implies $\ACR$. Determinacy supplies
$\PSP$ and $\CU$. For the latter, comeager uniformization
and its continuous restriction are stated in
\cite[Fact~2.22]{ChanMeehan2018}. Covering the exceptional meager
set by a meager $F_\sigma$ set gives a comeager $G_\delta$ domain.
Apply Theorem~\ref{thm:main}.
\end{proof}

\begin{corollary}[The choiceless Solovay model]\label{cor:Solovay}
Inside the choiceless Solovay model, every order-preserving map of
the Turing degrees belonging to that model is constant on a cone or
above the identity on a cone.
\end{corollary}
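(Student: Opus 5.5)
The plan is to verify the three hypotheses of Theorem~\ref{thm:main} inside the choiceless Solovay model and then apply that theorem to the given map. Let $\kappa$ be inaccessible in $V$, let $G$ be $\mathrm{Col}(\omega,<\kappa)$-generic, and let $M=\mathrm{HOD}(\mathbb R\cup\mathrm{On})^{V[G]}$ be the choiceless Solovay model. Solovay's classical analysis already gives $\mathsf{ZF}+\mathsf{DC}$ in $M$, hence $\DCR$ and $\ACR$. It also gives $\PSP$: every set of reals in $M$ is definable from a real and ordinals, and such sets have the perfect set property.

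The substantive step is $\CU$. Fix $S\subseteq\Cantor\times\Cantor$ in $M$ with every section nonempty. Then $S$ is definable in $V[G]$ from a real $r$ and ordinals. I would first uniformize $S$ on all of $\Cantor$ by a function $f\in M$. This uses the fact that, for a real $X$, some $Y$ with $S(X,Y)$ is ordinal-definable from $r\oplus X$. Indeed, $V[G]$ is a $\mathrm{Col}(\omega,<\kappa)$-extension of $V[r\oplus X]$, and the usual homogeneity argument yields a witness in some $V[r\oplus X][H]$ for a small collapse $H$. The cleaner route, however, is not to build $f$ explicitly. Every set of reals in $M$ has the Baire property, and Solovay's argument applies equally to subsets of $\Cantor\times\Cantor$ and to the category analogue of uniformization. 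So I would use the known fact that in the Solovay model every relation with full domain is uniformized on a comeager set by a Baire-measurable function. The argument runs over a countable model: a Cohen-generic real $X$ over $V[r]$ has a witness $Y$ that is a $\mathrm{Col}(\omega,\lambda)$-name evaluated at a generic for some $\lambda<\kappa$, and the set of Cohen-generic reals over $V[r]$ is comeager in $V[G]$ since $2^\omega\cap V[r]$ is countable there. Choosing the name and generic uniformly, for instance by taking the $<_{\mathrm{OD}}$-least name and working below a condition deciding $S$, gives a Baire-measurable $g$ on a comeager set.

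Having $g$, Lemma~\ref{lem:continuous-restriction} restricts it to a continuous function on a comeager set. Enlarging the meager complement to a meager $F_\sigma$ then yields a comeager $G_\delta$ domain, exactly as in the proof of Theorem~\ref{cor:AD}. The restriction lemma itself needs no choice beyond countable choice, since it is proved from a countable basis.

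Finally I apply Theorem~\ref{thm:main} in $M$ to $\varphi\in M$. The main obstacle is the comeager uniformization. The delicate points are making the selection of witnesses uniform (definable) without full choice, and checking Baire measurability of the selector. If the direct construction is awkward, the fallback is to cite the standard result that in the Solovay model every relation with full domain is uniformized on a comeager set by a continuous function. This result is the category counterpart of Solovay's measurable uniformization.
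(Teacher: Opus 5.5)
Your overall route is the paper's. Solovay's Theorem~1 supplies $\mathsf{DC}$ (hence $\ACR$), $\PSP$, and uniformization by a Borel function off a meager set (clause 5(b)). Lemma~\ref{lem:continuous-restriction} and the $F_\sigma$ enlargement then give $\CU$, and Theorem~\ref{thm:main} applies. Your fallback citation is exactly what the paper uses, so in that form the argument goes through.

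However, the step you announce first is false and should be deleted, not merely bypassed. It is not true that some witness $Y$ with $S(X,Y)$ is ordinal definable from $r\oplus X$, nor that $S$ can be uniformized on all of $\Cantor$ inside $M$. Take $V=L$ and $S(X,Y)\iff Y\notin L[X]$.
\begin{itemize}
\item Every section is nonempty, since $L[X]$ has only countably many reals in $L[G]$.
\item By Solovay's factor lemma, $L[G]$ is a $\mathrm{Col}(\omega,<\kappa)$-generic extension of $L[X]$. By weak homogeneity, every real that is $\mathrm{OD}(X)$ in $L[G]$ already lies in $L[X]$, so no witness is $\mathrm{OD}(X)$.
\item If $f\in M$ uniformized $S$, then $f$ would be definable from some real $z$ and ordinals. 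So $f(z)$ would be $\mathrm{OD}(z)$, hence in $L[z]$, contradicting $S(z,f(z))$.
\end{itemize}
The homogeneity argument you quote places a witness only in $V[r\oplus X][H]$ for a small generic $H$, and that witness depends on $H$. This is exactly why only a comeager (or conull) uniformization is available, and why $\CU$ is formulated with a comeager domain.

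The same dependence undermines your direct sketch as worded. If the collapse generic is taken over $V[r][X]$, it varies with $X$, and no $<_{\mathrm{OD}}$-least choice of name makes the selection uniform. The standard fix reverses the order of genericity.
\begin{itemize}
\item Work in $V[r]$. Use your homogeneity observation and a density argument to find, for each $p$ in a maximal antichain of Cohen forcing, an ordinal $\lambda_p<\kappa$, a condition $q_p\in\mathrm{Col}(\omega,\lambda_p)$ and a name $\tau_p$. These are chosen so that $(p,q_p)$ forces, in $\mathrm{Cohen}\times\mathrm{Col}(\omega,\lambda_p)$, that $\mathrm{Col}(\omega,<\kappa)$ forces the defining formula of $S$ at $(\dot X,\tau_p)$.
\item Fix in $V[G]$, once and for all, generics $H_p\ni q_p$ over $V[r]$.
\item Put $g(X)=\tau_p[X,H_p]$ for those $X$ that are Cohen generic over $V[r][\langle H_p\rangle_p]$ and extend $p$.
\end{itemize}
This set of $X$ is comeager in $V[G]$, and $g$ is continuous on it. The conclusion $S(X,g(X))$ follows from the product lemma and the factor lemma. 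With this correction, or simply with the citation of clause 5(b), your proof coincides with the paper's.
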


\begin{proof}
Solovay's Theorem~1 \cite{Solovay1970} supplies dependent choice,
the perfect-set property, and Borel uniformization with a meager
exceptional set (See clauses (1), (4) and 5(b) in the paper). Transfer this statement to $\Cantor$ using a
homeomorphism between comeager subspaces of $\R$ and $\Cantor$
and a Borel encoding of the output space. Restrict the uniformizer
first to a comeager
$G_\delta$ domain and then to a comeager $G_\delta$ set on which it
is continuous, using Lemma~\ref{lem:continuous-restriction}.
This gives $\CU$, so Theorem~\ref{thm:main} applies internally to
every such map.
\end{proof}

\section{Equiconsistency with an inaccessible cardinal}
\label{sec:consistency}

In this section the order-preserving dichotomy is considered as an
axiom in its own right. No determinacy hypothesis is included in its
formulation. All functions quantified over are functions belonging
to the universe under consideration.

\subsection{The two formulations}

\begin{definition}\label{def:OP}
The principle $\OPreal$ asserts that for every total Turing-invariant,
order-preserving function $f:\Cantor\to\Cantor$, at least one of
the following holds:
\begin{enumerate}[label=(\roman*)]
 \item there are reals $B,Y$ such that
 $f(X)\equivT Y$ for every $X\geT B$;
 \item there is a real $B$ such that
 $X\leT f(X)$ for every $X\geT B$.
\end{enumerate}
The principle $\OPdeg$ is the corresponding assertion for every
order-preserving map $\varphi:\DT\to\DT$.
\end{definition}

Here order preservation for a real function means
$X\leT Y\Longrightarrow f(X)\leT f(Y)$; in particular, it implies
Turing invariance. We retain both adjectives to match the usual
formulation of Martin's conjecture.

In $\mathsf{ZF}$, every function in the real formulation induces the
degree map $\degT{X}\mapsto\degT{f(X)}$. Consequently,
\begin{equation}\label{eq:OP-implication}
 \OPdeg\ \Longrightarrow\ \OPreal.
\end{equation}
The proof below does not require the converse. In particular, it
does not assume that an arbitrary degree map has a global real lift.
The upper bound establishes the degree-map principle, whereas the
lower bound already follows from the real-function principle.

\subsection{The lower bound from an inner-model counterexample}

\begin{theorem}[The obstruction at $\omega_1^{L[r]}=\omega_1$]
\label{thm:inaccessible-reals}
In $\mathsf{ZF}$,
\begin{equation}\label{eq:inaccessible-reals}
 \OPreal\quad\Longrightarrow\quad
 (\forall r\in\Cantor)\ \omega_1^{L[r]}<\omega_1.
\end{equation}
\end{theorem}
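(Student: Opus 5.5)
We argue contrapositively. Fix a real $r$ with $\omega_1^{L[r]}=\omega_1$. From this we construct, in $\mathsf{ZF}$, a total order-preserving function $f:\Cantor\to\Cantor$ that is neither constant in degree on any cone nor above the identity on any cone. The model is the Lutz--Siskind counterexample \cite[Theorem~6.11]{LutzSiskind2025}, which lives in a universe where $L[r]$ has a $\Delta^1_2(r)$ good wellorder of its reals. We carry it out inside $L[r]$ and then extend its defining rule to all reals of $V$.

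The natural rule uses the canonical $L[r]$-ordinal attached to each real. For $X\in\Cantor$, let $\alpha_X$ be the least countable ordinal $\alpha$ such that $X\notin L_\alpha[r]$; equivalently, use a definable ranking through $L[r]$. If $X\notin L[r]$, put $\alpha_X=\omega_1^{L[r\oplus X]}$. A candidate definition is
\[
 f(X)=\text{the $<_{L[r]}$-least real in $L[r]$ not computable from any real of $L_{\alpha}[r]$},\quad\alpha=\alpha(X\oplus r),
\]
possibly joined with $r$. More precisely, following Lutz--Siskind, we let $f(X)$ be an $L[r]$-real coding $L_{\beta}[r]$ for the least $\beta$ exceeding a degree-invariant, monotone ordinal assignment $X\mapsto\beta_X$. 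A good choice is $\beta_X=\omega_1^{X\oplus r}$, the least $X\oplus r$-admissible ordinal beyond $\omega$, or its analogue $\omega_1^{\mathrm{CK},X\oplus r}$.

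\emph{Order preservation.} If $X\leT Y$, then $\beta_X\le\beta_Y$. The chosen coding reals increase in Turing degree as $\beta$ increases, since the code of $L_\beta[r]$ computes the codes of all earlier levels when codes are taken canonically, for instance as the $L[r]$-least master code of $L_{\beta}[r]$ at the appropriate levels. So $f(X)\leT f(Y)$, and $f$ is defined without any choice.

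\emph{Not constant on a cone.} The ordinals $\beta_X$ are unbounded in $\omega_1^{L[r]}$ on every cone. Since $\omega_1^{L[r]}=\omega_1$, every countable ordinal is countable in $L[r]$, so the codes exist for all $\beta<\omega_1$. Moreover $\omega_1^{\mathrm{CK},X}$ is unbounded below $\omega_1$ on any cone, because the cone contains reals coding any countable ordinal. Hence $f$ takes unboundedly many degrees on every cone.

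\emph{Not above the identity.} Every value $f(X)$ lies in $L[r]$. Any cone contains reals outside $L[r]$: by $\omega_1^{L[r]}=\omega_1$ together with the fact that the cone contains $B\oplus Z$ for each $Z$, we pick $Z$ Cohen over $L[r]$ relative to some countable level, or else argue directly. If $X\leT f(X)$ held on the cone with base $B$, then every $X\geT B$ would lie in $L[r]$, so all reals would be in $L[r\oplus B]$. Then $\Cantor$ carries a wellorder of type $\omega_1$. This is not yet a contradiction in $\mathsf{ZF}$. We instead contradict it using $\mathrm{CK}$ ordinals: choose $X\geT B$ with $\omega_1^{\mathrm{CK},X}=\omega_1^{\mathrm{CK},B}$ but $X$ not hyperarithmetic in $B$, e.g.\ $X\leT\mathcal O^{B}$ of suitable low complexity. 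Then $f(X)=f(B)$, so $X\leT f(B)$ for all such $X$. There are uncountably many such $X$ (apply Gandy basis/Sacks-type reasoning), and each is computable from the single real $f(B)\oplus B$, a contradiction.

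The main obstacle is the last step together with choosing the ordinal assignment. We need $\beta_X$ to be Turing invariant and monotone, to be unbounded on every cone (this uses $\omega_1^{L[r]}=\omega_1$ so that the codes exist in $L[r]$), and to be constant on uncountable sets within every cone (to refute domination). Taking $\beta_X=\omega_1^{\mathrm{CK},X\oplus r}$ achieves all three. For the last property we use that the set of $X\geT B$ with $\omega_1^{\mathrm{CK},X\oplus r}=\omega_1^{\mathrm{CK},B\oplus r}$ is a $\Sigma^1_1(B\oplus r)$ set that is not countable, by the Gandy basis theorem and perfect-set arguments for $\Sigma^1_1$ sets, which hold in $\mathsf{ZF}$.
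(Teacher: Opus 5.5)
\paragraph{Gap: the increasing chain is never built.}
Your plan can be made to work, and it is genuinely different from the paper's. As written, though, it lacks the one ingredient that order preservation rests on: a strictly $<_T$-increasing sequence of reals of length $\omega_1$ lying in $L[r]$.

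You try to get this sequence from codes of the levels $L_\beta[r]$, justified by the claim that ``the code of $L_\beta[r]$ computes the codes of all earlier levels when codes are taken canonically.'' That claim is not established, and the natural argument does not give it.
\begin{itemize}
\item From a code $E$ of $L_\beta[r]$ you can Turing-compute \emph{some} code of $L_\gamma[r]$ for $\gamma<\beta$, by restricting $E$ to the predecessors of the point representing $L_\gamma[r]$. Nothing forces it to compute the $<_{L[r]}$-least code.
\item Extracting a real that is an element of the coded structure requires locating the coded natural numbers, which costs finitely many jumps of $E$.
\item So you get at best arithmetic reducibility, and only when the earlier code already lies in $L_\beta[r]$. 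For genuine master codes at projectible levels, Turing monotonicity is a nontrivial fine-structural result that you neither prove nor cite.
\end{itemize}
Your non-constancy step also silently uses strictness of the chain, which you never argue.

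\paragraph{Repair, and comparison with the paper.}
The fix is easy, and then your route goes through. Build the chain in $L[r]$ as the paper does: $A_0=0^\omega$ and $A_\alpha=(\bigoplus_nA_{e_\alpha(n)})'$, with surjections $e_\alpha:\omega\to\alpha$ fixed in $L[r]$. Put $f(X)=A_{\omega_1^{X\oplus r}}$, which is defined because $\omega_1^{X\oplus r}<\omega_1=\omega_1^{L[r]}$.
\begin{itemize}
\item Monotonicity is then immediate.
\item In every cone, $B$ and $B\oplus\mathcal O^{B\oplus r}$ receive different values, so $f$ is not constant on any cone.
\item Domination fails by the argument in your last paragraph. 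Drop the clause ``$X\leT\mathcal O^B$'', which describes only countably many reals. Instead take a perfect subset of the $\Sigma^1_1(B\oplus r)$ set $\{Z:\omega_1^{Z\oplus B\oplus r}=\omega_1^{B\oplus r}\}$, using the $\mathsf{ZF}$-provable perfect-set theorem for $\Sigma^1_1$ sets. This gives uncountably many inputs $Z\oplus B$ with value $f(B)$, all of which would then be computable from $f(B)$.
\end{itemize}

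The paper instead uses the least index $\alpha_X$ with $A_{\alpha_X}\nleq_T X$. Then $F(X)\nleq_T X$ holds outright, which rules out constancy at once. It refutes domination with a single real above $A_\alpha$ incomparable with $A_{\alpha+1}$.

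Your version needs Gandy/Harrison machinery from higher recursion theory, but it yields a stronger fact: $f$ is constant on a perfect set inside every cone. The paper's version stays entirely within elementary degree theory.
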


\begin{proof}
Suppose that $r\in\Cantor$ satisfies
\[
 \kappa=\omega_1^{L[r]}=\omega_1^V,
\]
and put $M=L[r]$. We construct a counterexample to $\OPreal$
on all reals of $V$.

\smallskip\noindent
\emph{The sequence chosen in $M$.}
Work inside $M$. Since every $0<\alpha<\kappa$ is countable
there, use choice in $M$ to fix surjections
$e_\alpha:\omega\to\alpha$ in $L[r]$. Define by transfinite recursion
\[
 A_0=0^\omega,\qquad
 A_\alpha=\left(\bigoplus_{n\in\omega}
                  A_{e_\alpha(n)}\right)'
       \quad(0<\alpha<\kappa),
\]
where the prime denotes the Turing jump. Every earlier real is
computable from the join at stage $\alpha$, and the jump is
strictly above that join. Thus
\begin{equation}\label{eq:old-chain}
 A_\alpha<_T A_\beta\qquad(\alpha<\beta<\kappa).
\end{equation}
Turing reducibility between these fixed reals is arithmetic and
hence absolute between $M$ and $V$. Therefore
\eqref{eq:old-chain} continues to hold in $V$. The entire sequence
belongs to $M$; no choice indexed by the ambient reals is needed.

\smallskip\noindent
\emph{The function on all ambient reals.}
For $X\in(\Cantor)^V$, define
\begin{equation}\label{eq:least-missing}
 \alpha_X=\min\{\alpha<\kappa:A_\alpha\nleq_T X\},
 \qquad F(X)=A_{\alpha_X}.
\end{equation}
The minimum exists due to uncountability of $\kappa$. Definition~\eqref{eq:least-missing} therefore
gives a total real function in $V$, including at inputs outside $M$.
It uses only the fixed sequence and least ordinals.

\smallskip\noindent
\emph{Order preservation and Turing invariance.}
If $X\leT Y$, every $A_\beta$ with $\beta<\alpha_X$ is computable
from $X$ and hence from $Y$. Thus $\alpha_X\le\alpha_Y$, and the
increasing chain gives $F(X)\leT F(Y)$. If $X\equivT Y$, the
sets of indices computed by the two inputs coincide; in fact
$\alpha_X=\alpha_Y$ and $F(X)=F(Y)$.

\smallskip\noindent
\emph{Failure of both cone conclusions.}
By its definition,
\begin{equation}\label{eq:output-not-below-input}
 F(X)\nleq_T X\qquad(X\in\Cantor).
\end{equation}
If $F$ were constant in degree with value $\degT{Y}$ on the cone
above $\degT{B}$, take $X=B\oplus Y$. Then
$F(X)\equivT Y\leT X$, contradicting
\eqref{eq:output-not-below-input}.

Suppose next that $X\leT F(X)$ for every $X\geT B$, for some
real $B$. Put $\alpha=\alpha_B$. Evaluating at $B$ gives
$B\leT F(B)=A_\alpha$. Since
$A_\alpha<_T A_{\alpha+1}$, choose inside $M$ a real
$X\geT A_\alpha$ incomparable with $A_{\alpha+1}$ (see \cite{Sacks1961}).
These relations remain true in $V$. In particular, $X\geT B$,
every $A_\beta$ with $\beta\le\alpha$ is computable from $X$,
and $A_{\alpha+1}$ is not. Hence
\begin{equation}\label{eq:input-not-below-output}
 \alpha_X=\alpha+1,\qquad F(X)=A_{\alpha+1},\qquad
 X\nleq_T F(X).
\end{equation}
This contradicts the assumed domination on the cone above $B$.
Thus $F$ satisfies neither conclusion of $\OPreal$.

Finally, $\omega_1^{L[r]}\le\omega_1^V$ always holds, since an
ordinal countable in $L[r]$ is countable in $V$. The equality case
having been excluded, \eqref{eq:inaccessible-reals} follows.
\end{proof}

\begin{lemma}[See Proposition~11.5, p.~135, \cite{Kanamori2003}]\label{lem:inaccessible-L}
Work in $\mathsf{ZF}+\ACR$\footnote{We need $\ACR$ to prove regularity of $\omega_1$ here.}. If
$\omega_1^{L[r]}<\omega_1$ for every real $r$, then
$\omega_1^V$ is a strongly inaccessible cardinal in $L$.
\end{lemma}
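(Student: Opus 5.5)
The plan is to show two things about $\kappa=\omega_1^V$ inside $L$: that it is regular there, and that it is a strong limit there. Since $L\models\mathsf{GCH}$, being a strong limit in $L$ is the same as being a limit cardinal in $L$. Both facts will be read off from the hypothesis $\omega_1^{L[r]}<\omega_1$ for every real $r$, with $\ACR$ entering only through the regularity of $\omega_1$ in $V$.

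\emph{Step 1: regularity in $V$ and in $L$.} Under $\ACR$, $\omega_1$ is regular in $V$. Take a countable sequence of countable ordinals $\alpha_n$. Each set of reals coding a well-order of type $\alpha_n$ is nonempty, so countable choice for reals picks codes $W_n$. From the sequence $(W_n)$ we obtain an injection of $\sup_n\alpha_n$ into $\omega\times\omega$, so the supremum is countable. This is the only use of $\ACR$, as the footnote indicates.

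Regularity then transfers downward. If $\mathrm{cf}^L(\kappa)=\lambda<\kappa$, the cofinal map from $\lambda$ into $\kappa$ that lies in $L$ is also a cofinal map in $V$. Since $\lambda$ is countable in $V$, this contradicts the regularity of $\omega_1^V$. Hence $\kappa$ is regular in $L$.

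\emph{Step 2: limit cardinal in $L$.} Let $\lambda<\kappa$ be an infinite ordinal. Then $\lambda$ is countable in $V$, so some real $r$ codes a well-ordering of $\omega$ of type $\lambda$. In $L[r]$ this code can be decoded, so $\lambda$ is countable in $L[r]$. By hypothesis $\omega_1^{L[r]}<\kappa$. Moreover $(\lambda^+)^L\le(\lambda^+)^{L[r]}=\omega_1^{L[r]}$, because any $L$-cardinal above $\lambda$ stays uncountable only if it survives into $L[r]$, and cardinals of $L[r]$ are cardinals of $L$ (a bijection in $L$ is also in $L[r]$). Hence $(\lambda^+)^L<\kappa$, so $\kappa$ is a limit cardinal in $L$. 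It is also uncountable in $L$, since $\omega_1^L\le\omega_1^V$ and indeed $\omega_1^L<\kappa$ by the case $\lambda=\omega$.

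\emph{Step 3: combine.} Steps 1 and 2 make $\kappa$ a regular uncountable limit cardinal in $L$, that is, weakly inaccessible in $L$. $\mathsf{GCH}$ in $L$ upgrades this to strongly inaccessible.

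The delicate point is Step 1: without $\ACR$, $\omega_1$ could be singular, as in the Feferman--Lévy model. One must check that choosing codes for countably many ordinals really is an instance of countable choice for nonempty sets of reals, which it is. Step 2 is routine absoluteness.
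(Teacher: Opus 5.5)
Your proof is correct and is the standard argument behind the cited Kanamori proposition, which the paper invokes without reproducing. $\ACR$ gives the regularity of $\omega_1$ in $V$, which passes down to $L$. A real $r$ coding $\lambda<\omega_1$ gives $(\lambda^+)^L\le(\lambda^+)^{L[r]}=\omega_1^{L[r]}<\omega_1$, and $\mathsf{GCH}$ in $L$ upgrades weak to strong inaccessibility. The only blemish is the garbled first clause of your justification in Step~2. The parenthetical fact that every cardinal of $L[r]$ is a cardinal of $L$ is all that is needed there.
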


The condition in \eqref{eq:inaccessible-reals} is often expressed
by saying that $\omega_1$ is \emph{inaccessible to reals}. The
last lemma includes the regularity hypothesis through $\ACR$;
this point matters in a choiceless base theory.

\begin{corollary}[The lower consistency bound]\label{cor:lower-bound}
Over $\mathsf{ZF}+\ACR$, either $\OPreal$ or $\OPdeg$ implies
\[
 L\models\text{``$\omega_1^V$ is strongly inaccessible.''}
\]
In particular, either principle gives an inner model of
$\mathsf{ZFC}$ with a strongly inaccessible cardinal.
\end{corollary}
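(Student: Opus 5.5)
The plan is to chain the results already established. By \eqref{eq:OP-implication}, $\OPdeg$ implies $\OPreal$ over $\mathsf{ZF}$, so it is enough to treat $\OPreal$. Work in $\mathsf{ZF}+\ACR+\OPreal$. Theorem~\ref{thm:inaccessible-reals} yields $\omega_1^{L[r]}<\omega_1$ for every real $r$; that is, $\omega_1$ is inaccessible to reals. Lemma~\ref{lem:inaccessible-L} then applies, since its hypothesis $\ACR$ is part of the base theory, and gives $L\models$ ``$\omega_1^V$ is strongly inaccessible.'' This proves the displayed conclusion.

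For the ``in particular'' clause, I would take $L$ itself as the inner model. In $\mathsf{ZF}$, the constructible universe $L$ satisfies $\mathsf{ZFC}$, since $L\models V=L$ and $V=L$ implies choice. Hence $L$ is an inner model of $\mathsf{ZFC}$ in which the ordinal $\omega_1^V$ is strongly inaccessible. Relativizing to $L$ turns this into the consistency implication $\Con(\mathsf{ZF}+\ACR+\OPreal)\Rightarrow\Con(\mathsf{ZFC}+\exists\kappa$ strongly inaccessible$)$, and the same holds with $\OPdeg$ in place of $\OPreal$.

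I expect no real obstacle; the only point needing care is bookkeeping. First, Theorem~\ref{thm:inaccessible-reals} needs only $\OPreal$, not its degree-map version, which is why the implication \eqref{eq:OP-implication} is used in the direction stated. Second, the regularity of $\omega_1$ inside Lemma~\ref{lem:inaccessible-L} is exactly where $\ACR$ enters. Without $\ACR$, $\omega_1$ could be singular, and the lemma's conclusion that $\omega_1^V$ is regular in $L$ would have to be justified differently.
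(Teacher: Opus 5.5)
Your proposal is correct and matches the paper's proof, which combines \eqref{eq:OP-implication}, Theorem~\ref{thm:inaccessible-reals}, and Lemma~\ref{lem:inaccessible-L} in exactly the order you describe, with $L$ itself serving as the inner model of $\mathsf{ZFC}$. One refinement to your closing remark: without $\ACR$ the conclusion can fail outright, not merely require another justification, since in the Feferman--Levy model $\omega_1$ is inaccessible to reals yet $\omega_1^V=\aleph_\omega^L$ is singular in $L$.
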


\begin{proof}
Combine \eqref{eq:OP-implication},
Theorem~\ref{thm:inaccessible-reals}, and
Lemma~\ref{lem:inaccessible-L}.
\end{proof}

\subsection{The upper bound and the equiconsistency}

\begin{proof}[Proof of Theorem~\ref{thm:consistency}]
Write $I$ for the assertion that a strongly inaccessible cardinal
exists. Full $\mathsf{DC}$ implies every permitted base principle
$B$. The Solovay construction and Corollary~\ref{cor:Solovay}
therefore give
\[
 \Con(\mathsf{ZFC}+I)
 \ \Longrightarrow\ \Con(\mathsf{ZF}+B+\OPdeg).
\]
Equation~\eqref{eq:OP-implication} then gives the implication from
the consistency of~\textup{(iii)} to that of~\textup{(ii)}.
Conversely, each permitted $B$ implies $\ACR$, so
Corollary~\ref{cor:lower-bound} gives
\[
 \Con(\mathsf{ZF}+B+\OPreal)
 \ \Longrightarrow\ \Con(\mathsf{ZFC}+I).
\]
These implications establish the claimed equiconsistencies. The
additional assertion about $\omega_1^V$ follows from
Corollary~\ref{cor:lower-bound}.
\end{proof}

\section*{Acknowledgments}
The author formulated the research program and sketched the proof
strategy. ChatGPT (OpenAI) assisted in developing and filling in
technical details of the proofs, checking intermediate arguments
and references, and preparing the exposition and \LaTeX{} manuscript.
The author is responsible for the final mathematical content.
 
\Needspace{17\baselineskip}


\begin{thebibliography}{9}

\bibitem{ChanMeehan2018}
W.~Chan and C.~Meehan,
\emph{Definable combinatorics of some Borel equivalence relations},
preprint, arXiv:1709.04567; author version dated March 11, 2018.
\url{https://williamchan-math.github.io/papers/definable_combinatorics_equivalence_relation.pdf}.

\bibitem{GroszekSlaman1998}
M.~J.~Groszek and T.~A.~Slaman,
\emph{A basis theorem for perfect sets},
The Bulletin of Symbolic Logic \textbf{4} (1998), no.~2, 204--209.
\href{https://doi.org/10.2307/421023}{doi:10.2307/421023}.
Author version: \url{https://math.berkeley.edu/~slaman/papers/prikry.pdf}.

\bibitem{Jockusch1972}
C.~G.~Jockusch, Jr.,
\emph{Degrees in which the recursive sets are uniformly recursive},
Canadian Journal of Mathematics \textbf{24} (1972),
no.~6, 1092--1099.
\href{https://doi.org/10.4153/CJM-1972-113-9}
{doi:10.4153/CJM-1972-113-9}.

\bibitem{Soare2016}
R.~I.~Soare,
\emph{Turing Computability: Theory and Applications},
Theory and Applications of Computability,
Springer, Berlin--Heidelberg, 2016.
\href{https://doi.org/10.1007/978-3-642-31933-4}
{doi:10.1007/978-3-642-31933-4}.

\bibitem{Kanamori2003}
A.~Kanamori,
\emph{The Higher Infinite:
Large Cardinals in Set Theory from Their Beginnings},
second edition,
Springer Monographs in Mathematics,
Springer-Verlag, Berlin, 2003.

\bibitem{Kechris1995}
A.~S.~Kechris,
\emph{Classical Descriptive Set Theory},
Graduate Texts in Mathematics, vol.~156,
Springer-Verlag, New York, 1995.
\href{https://doi.org/10.1007/978-1-4612-4190-4}
{doi:10.1007/978-1-4612-4190-4}

\bibitem{LutzSiskind2025}
P.~Lutz and B.~Siskind,
\emph{Part 1 of Martin's Conjecture for order-preserving and
measure-preserving functions},
Journal of the American Mathematical Society \textbf{38} (2025),
no.~2, 319--367.
\href{https://doi.org/10.1090/jams/1046}{doi:10.1090/jams/1046}.
Preprint: \url{https://arxiv.org/abs/2305.19646v3}.
Theorem numbering follows this preprint version.

\bibitem{Martin1966}
D.~A.~Martin,
\emph{Classes of recursively enumerable sets and degrees of unsolvability},
Zeitschrift f\"ur mathematische Logik und Grundlagen der Mathematik
\textbf{12} (1966), 295--310.
\href{https://doi.org/10.1002/malq.19660120125}{doi:10.1002/malq.19660120125}.

\bibitem{Sacks1961}
G.~E.~Sacks,
\emph{On suborderings of degrees of recursive unsolvability},
Zeitschrift f\"ur mathematische Logik und Grundlagen der Mathematik
\textbf{7} (1961), 46--56.

\bibitem{Solovay1970}
R.~M.~Solovay,
\emph{A model of set-theory in which every set of reals is Lebesgue
measurable},
Annals of Mathematics (2) \textbf{92} (1970), no.~1, 1--56.
\href{https://doi.org/10.2307/1970696}{doi:10.2307/1970696}.

\end{thebibliography}
\end{document}